
\documentclass[12pt]{amsart}
\usepackage{amssymb, latexsym, amsthm} 

\linespread{1.1}

\newtheorem{thm}{Theorem}[section] 

\newtheorem{cor}[thm]{Corollary}
\newtheorem{defn}[thm]{Definition}

\newtheorem{prop}[thm]{Proposition}

\newtheorem{rem}[thm]{Remark}

\DeclareMathOperator{\ed}{ed}
\DeclareMathOperator{\Cor}{Cor}

\DeclareMathOperator{\ind}{ind}
\DeclareMathOperator{\SB}{SB}
\DeclareMathOperator{\M}{M}

\DeclareMathOperator{\ch}{char}
\DeclareMathOperator{\Brdim}{Br.dim}

\DeclareMathOperator{\Br}{Br}
\DeclareMathOperator{\Tr}{Tr}
\DeclareMathOperator{\trdeg}{trdeg}
\DeclareMathOperator{\Nrd}{Nrd}

\DeclareMathOperator{\N}{N}

\def\ra{{\rightarrow}}

\newcommand\operA[2]{{\if!#2!\operatorname{#1}\else{\operatorname{#1}_{#2}^{\phantom{I}}}\fi}} 


\def\tr{{\operatorname{Tr}}}


\newcommand\mul[1]{{#1^{\times}}} 

\newcommand{\Trace}[1][]{\if!#1!\operatorname{Tr}\else{\operatorname{Tr}_{#1}^{\phantom{I}}}\fi} 

\newcommand\book[4]{{{#1},\ {{#2}}{\if!#3!\relax\else{,\ {#3}}\fi}{\if!#4!\relax\else{,\ {#4}}\fi}.}} 
\newcommand\st{{\,|\:}}
\newcommand\co{{\,:\,}}
\newcommand\len{{\operatorname{len}}}
\newcommand\isom{{\,\cong\,}}

\newif\ifXY 
\XYtrue     
%
\ifXY

\input xy
\input xyidioms.tex
\usepackage{xy}
\xyoption{all} %
\fi 

\begin{document}
\title{Symbol length in the Brauer group of a field}
\author{Eliyahu Matzri}

\thanks{The author thanks Daniel Krashen, Andrei Rapinchuk, Louis Rowen, David Saltman and Uzi Vishne for all their help, time and support.}
\thanks{This work was partially supported by the BSF, grant number 2010/149.}

\begin{abstract}
We bound the symbol length of elements in the Brauer group of a field $K$ containing a $C_m$ field (for example any field containing an algebraically closed field or a finite field), and solve the local exponent-index problem for a $C_m$ field $F$. In particular, for a $C_m$ field $F$, we show that every $F$ central simple algebra of exponent $p^t$ is similar to the tensor product of at most $\len(p^t,F)\leq t(p^{m-1}-1)$ symbol algebras of degree $p^t$. We then use this bound on the symbol length to show that the index of such algebras is bounded by $(p^t)^{(p^{m-1}-1)}$, which in turn gives a bound for any algebra of exponent $n$ via the primary decomposition. Finally for a field $K$ containing a $C_m$ field $F$, we show that every $F$ central simple algebra of exponent $p^t$ and degree $p^s$ is similar to the tensor product of at most $\len(p^t,p^s,K)\leq \len(p^t,L)$ symbol algebras of degree $p^t$, where $L$ is a $C_{m+\ed_L(A)+p^{s-t}-1}$ field.

\end{abstract}

\maketitle


%
\section{Introduction}
%

We are interested in the following two problems:\\
\textbf{The symbol length problem}:
Let $F$ be a field and $A$ a $F$ central simple algebra of exponent~$n$.
Assuming $F$ contains a primitive $n$-th root of one $\rho_n$, the Merkurjev-Suslin theorem tells us that any such $A$ is brauer equivalent to the tensor product of symbol algebras of degree~$n$.
The minimal number of symbol algebras needed is called the symbol length of $A$ denoted $\len(n,A)$.
The symbol length problem asks if there is a finite bound $\len(n,F)$, such that for any $A\in \Br(F)$ of exponent $n$ one has $\len(n,A)\leq \len(n,F)$. One can filter the $n$-th torsion of the brauer group by degree and define $\len(n,m,F)$ or $\len(n,m)$ when it is independent of the field $F$, as the minimal number of symbols needed to express every $A\in \Br(F)$ of exponent $n$ and degree~$m$. Notice that the existence of a generic division algebra of exponent $n$ and degree $m$ implies $\len(n,m)$ is always finite. Finding an explicit bound for $\len(n,m)$ is also referred to as the symbol length problem.\\
\textbf{The exponent-index problem}:
It is well known that for any \linebreak $A\in \Br(F)$ the exponent of $A$ divides the index of $A$ and the two numbers have the same prime factors; in particular the exponent is bounded by the index.
The exponent-index problem asks if one can bound the index in terms of the exponent.
To be more precise, for a prime $p$ define the Brauer dimension at $p$, denoted $\Brdim_p(F)$, to be the smallest integer $d$ such that for all $n\in \mathbb{N}$ and $A\in \Br_{p^n}(F)$, $\ind(A)$ divides $\exp(A)^d$, and $\infty$ if no such number exists. Then define the Brauer dimension of $F$ to be $\Brdim(F)=\sup \{ \Brdim_p(F) \}$.
The global exponent-index problem asks if $\Brdim(F)$ is finite and the local exponent-index problem asks if $\Brdim_p(F)$ is finite.

The answer to these problems is negative for arbitrary fields. To see this consider the field $F=\mathbb{Q}[\rho_p](x_1,y_1,...,x_i,y_i,...)$ and define\\ $A_n=\otimes^{n}_{i=1}(x_i,y_i)_{F,p}$. Then it is known that $A_n$ is a division algebra (see for example \cite[Corollary 1.2]{dJ04}), that is $\ind(A_n)=p^n$, and $\exp(A_n)=p$.\\ In particular $\len(p,A_n)=n$ and $\Brdim_p(F)\geq n$ for all $n\in \mathbb{N}$, implying $\len(p,F)=\infty$ and $\Brdim(F)=\Brdim_p(F)=\infty$.

 It seems that a positive answer to these problems is strongly related to the arithmetic of the base field $F$. This is supported by the following results:
\begin{enumerate}
\item For $F$ a local or global field, $\Brdim(F)=1$ by the Albert-Brauer-Hasse-Noether theorem \cite{A0} and \cite{BHN}.
\item For $F$ a $C_2$ field, M. Artin conjectured \cite{Art82} that $\Brdim(F)~=~1$. 
	He proved that $\Brdim_2(F)= \Brdim_3(F)= 1$ for such fields. 
\item For $F$ a finitely generated field of transcendence degree $2$ over an algebraically closed field, $\Brdim(F)=1$ by \cite{dJ04} and \cite[Theorem 4.2.2.3]{Lie08}.
\item For $F$ finitely generated and of transcendence degree 1 over an $\ell$-adic field,
$\Brdim_p(F)= 2$ for every prime $p\neq \ell$ by \cite{Sal97a}.

\end{enumerate}

Motivated by M. Artin's results (\cite{Art82}) we focus our attention on a class of fields called $C_m$ fields.
A field $F$ is called $C_m$ if it has the property that every homogeneous equation $f(x_1,...,x_n)=0$, of degree~$d$ has a non-trivial solution when $n>d^m$.

We solve both the symbol length and the local exponent-index problems for these fields by giving explicit bounds on $\len(n,F)$ and $\Brdim_p(F)$.
In particular we prove the following theorems,\\
\textbf{Theorem 4.4} 
Let $F$ be a $C_m$ field containing all $n$-th roots of unity and $A\in \Br_n(F)$ be of exponent $n=p^t$. Then:
$A\sim \otimes_{i=1}^t C_i$ where $C_i=\otimes_{j=1}^{p^{m-1}-1} (\alpha_{i,j},\beta_{i,j})_{p^{i}}$. In particular  $\len(p^t,F)\leq t(p^{m-1}-1)$. \\
\textbf{Theorem 5.3} 
Let $F$ be a field containing a $C_m$ field $L$ and all\linebreak $p^t$-th roots of unity, and $A$ be a $F$-csa of exponent $p^t$ and degree $p^s$. Then the symbol length of $A$ is bounded by $\len(p^t,K)$ where $K$ is a $C_{m+\ed_L(A)+p^{s-t}-1}$ field. \\
\textbf{Theorem 6.3} 
If $F$ is a $C_m$ field, then $\Brdim_p(F)\leq p^{m-1}-1$. \\
\textbf{Theorem 8.2} 
Let $F$ be a $C_m$ field and let $\alpha \in K^M_2(F)/nK^M_2(F)$ where $n=p^t$, then $\alpha$ can be written as the sum of at most $t(p^{m-1}-1)$ symbols. \\

The approach we take is to first bound the symbol length and then use this bound to get a bound for $\Brdim_p(F)$.
To bound the symbol length we start with $A\in \Br_{p^n}(F)$, use the Merkurjev-Suslin theorem to assume it is a product of symbol algebras, and then show how to shorten the number of symbol algebras down to a fixed number.
The key idea is to consider $A_k=\otimes_{i=1}^k (\alpha_i, \beta_i)$ for $k\in\mathbb{N}$ and produce ``large" vector spaces $V_k\leq A_k$ called $n$-Kummer spaces with the property that for every $v\in V_k$ one has $v^n\in F$. These spaces have ``low" degree norm forms, $\N_i:V_i\rightarrow F$ defined by $\N_i(v)=v^n$. Thus when $k$ is big enough the $C_m$ property ensures the existence of a non trivial solution for $\N_k(v)=0$ from which we deduce how to shorten the number of symbols.

The paper is organized as follows:
We start with a background section where we give the main definitions needed for this work and some known results in the subject.
In section~$3$ we use $n$-Kummer spaces and their norm forms to get bounds on the symbol length for arbitrary exponent $n$.
In section~$4$ we use known results about primary decomposition in the Brauer group and a divisibility property of symbol algebras to improve the bounds obtained in section~$3$. 
In section~$5$ we generalize the discussion to fields containing a $C_m$ for some $m$. 
Section~$6$ is devoted to the exponent-index problem, where we use the results in section~$4$ to solve the local exponent-index problem. 
Section~$7$ is devoted to the characteristic $p>0$ case.
Finally in the section~$8$ we show that our results can be formulated in the context of the second Milnor $K$-group where the presence of roots of unity is not required.


%
%
\section{Background}

\subsection{The Brauer group}
Let $F$ be a field. A $F$-central simple algebra, denoted $F$-csa, is an $F$-algebra, simple as a ring with center $F$.
The Brauer group of $F$ is defined as $$\{ \hbox{isomorphism classes of finite dimensional $F$-csa} \}/\sim$$
where for two $F$-csa $A$ and $B$, $$A\sim B\Leftrightarrow \exists n,m\in \mathbb{N} : \M_n(A)\cong \M_m(B)$$ 
It is well-known that $\Br(F)$ is a torsion group. We write $\exp(A)$ for the order of $A$ in $\Br(F)$ and $\Br_n(F)$ for the $n$-torsion subgroup of $\Br(F)$.
By the Wedderburn-Artin theorem every $F$-csa $A$ is isomorphic to $\M_n(D)$ for unique $n\in \mathbb{N}$ and $F$-central division algebra $D$ called the underlying division algebra of $A$.
One defines the degree and index of~$A$ as $\deg(A)=\sqrt{\dim_F(A)}$ and $\ind(A)=\deg(D)$ respectively.  
For a more detailed study of the Brauer group  we refer the reader to \cite{RB}, \cite{Jac} or \cite{GS} .

%
An important example of $F$-csa are symbol algebras which we now define.
Let $F$ be a field containing a primitive $n$-th root of $1$ denoted~$\rho _n$ and $a,b\in F^{\times}$.
Define the symbol algebra $$(a,b)_{n,F}=F[x,y|x^n=a, y^n=b, yx=\rho_nxy].$$ Then $(a,b)_{n,F}$ is a $F$-csa of degree $n$ and exponent dividing $n$.\\
A standard pair of generators for $(a,b)_{n,F}$ is a pair $u,v\in (a,b)_{n,F}$ satisfying $u^n\in \mul{F}, \ v^n\in \mul{f}$ and $uv=\rho_n vu$, and for any such pair one has $(a,b)_{n,F}\cong (v^n,u^n)_{n,F}$.

The most famous example is the well known quaternion algebra which has the presentation $(-1,-1)_{2,\mathbb{R}}$.
The presentation of a $F$-csa as a symbol algebra (or the tensor product of several symbol algebras) is not unique, and starting from a given presentation one can produce many others.
The following proposition tells us that we may assume one of the slots represents a field, that is,
 
\begin{prop}\label{P}
Assume $A=(a,b)_{n}$ does not split. Then we can modify the presentation of $A$ such that $F[x]$ is a field where $x^n=a$. 

\end{prop}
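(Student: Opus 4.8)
The plan is to reduce the statement to a Kummer-theoretic criterion and then, when that criterion fails, to manufacture a genuinely new radical maximal subfield rather than merely shuffling the two given slots.

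First I would pin down exactly when $F[x]$, with $x^n=a$, fails to be a field. Since $1,x,\dots,x^{n-1}$ belong to the standard basis $\{x^iy^j\}$ of $(a,b)_n$, the minimal polynomial of $x$ is exactly $t^n-a$, so $F[x]\cong F[t]/(t^n-a)$, and this is a field precisely when $t^n-a$ is irreducible. Because $F$ contains $\rho_n$, Kummer theory identifies $[F(\sqrt[n]{a}):F]$ with the order of $a$ in $F^\times/(F^\times)^n$; hence $F[x]$ is a field iff $a$ has order $n$, and it fails exactly when $a\in(F^\times)^p$ for some prime $p\mid n$. I would record at the outset that simply interchanging the two slots does not suffice: the isomorphism $(a,b)_n\cong(b,a)_n$ (with $\rho_n$ replaced by $\rho_n^{-1}$) only swaps $a$ and $b$, and there exist nonsplit symbols in which neither slot has order $n$ — already for $n=p^3$, since $(c^{p},e^{p})_{p^3}\sim(c,e)_p$ has both slots of order at most $p^2$. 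So a new pair of generators must be produced.

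The main step is therefore to exhibit a standard pair $u,v$ in which $u^n\in F^\times$ has order $n$. Since $A$ is nonsplit, $r:=\ind(A)$ divides $n$ and $A\cong M_{n/r}(D)$ with $D$ division of degree $r$. Using the reduction $(c^{\,p},b)_n\cong M_p\bigl((c,b)_{n/p}\bigr)$, which follows from $F(\sqrt[n]{c^{p}})=F(\sqrt[n/p]{c})$ together with a comparison of degrees and Brauer classes, I would present $D$ as a symbol $(c_0,b_0)_r$ (the underlying division algebra of a symbol algebra is again a symbol algebra, of degree its index). Here $c_0$ automatically has order $r$ in $F^\times/(F^\times)^r$, for otherwise $D$ would be split by a field of degree $<r$, contradicting $\ind(D)=r$. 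Thus $K_0:=F(\sqrt[r]{c_0})$ is a radical maximal subfield of $D$ and $D=(K_0/F,\sigma_0,b_0)$ is cyclic with $\sigma_0(\sqrt[r]{c_0})=\rho_r\sqrt[r]{c_0}$. It then remains to lift $K_0$ to a radical cyclic maximal subfield of $A$ of full degree $n$: I would seek $a'\in F^\times$ of order $n$ with $F(\sqrt[n]{a'})\supseteq K_0$, so that $F(\sqrt[n]{a'})$ splits $D$ and hence $A$, realize $A$ as the cyclic algebra $(F(\sqrt[n]{a'})/F,\sigma,b')$ with $\sigma(\sqrt[n]{a'})=\rho_n\sqrt[n]{a'}$, and finally read off the presentation $(a',b')_n$. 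The identification $A\cong(a',b')_n$ is then justified by the general fact that two $F$-central simple algebras which are Brauer equivalent and of equal degree are isomorphic.

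The hard part is precisely this last lift: constructing an order-$n$ element $a'$ — equivalently, a degree-$n$ radical cyclic maximal subfield of $A$ — whose degree-$r$ subfield splits $D$ and whose associated cyclic algebra recovers the given Brauer class with the correct action by $\rho_n$. Concretely one must choose $a'$ of order $n$ lying over $c_0$ (subject to the ambiguity that preserves $F(\sqrt[r]{a'})=K_0$) and then check that $\sigma$ may be chosen so that the cup product $\chi_{a'}\cup(b')$ equals $[A]$. Verifying that such an $a'$ exists, and that the full cyclic structure (not merely the Brauer class) can be matched, is where the real content of the proposition lies, and where I would expect essentially all of the effort to go.
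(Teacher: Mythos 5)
Your opening reduction (with $\rho_n\in F$, the ring $F[x]$ is a field exactly when $a$ has order $n$ in $F^{\times}/(F^{\times})^n$) and your remark that merely swapping the two slots cannot suffice are both correct, but the core of the argument has a genuine gap --- two, in fact. The pivotal claim, that the underlying division algebra $D$ can be presented as a symbol $(c_0,b_0)_r$ of degree exactly $r=\ind(A)$, is not justified by the reduction $(c^{p},b)_n\cong \M_p\bigl((c,b)_{n/p}\bigr)$: that reduction can be iterated only as long as some slot remains a $p$-th power for a prime $p$ dividing the current degree, and when it stalls you hold a symbol whose slots have full order --- but such a symbol need not be division. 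For instance, over $\mathbb{Q}(\rho_p)$ ($p$ an odd prime) the symbol $(2,3)_p$ is split, since $3=\N_{K/F}(1+\sqrt[p]{2})$ for $K=\mathbb{Q}(\rho_p,\sqrt[p]{2})$, yet neither $2$ nor $3$ is a $p$-th power there; so ``no slot is a power'' does not detect the index, and the iteration can terminate at a degree strictly larger than $r$. The parenthetical assertion that the underlying division algebra of a symbol is again a symbol is not a citable fact; it is closely related to open cyclicity questions, and your proposal gives no argument for it. Separately, you explicitly defer the ``lift'' back up to a degree-$n$ presentation, calling it the step where ``essentially all of the effort'' would go --- a proposal whose acknowledged hard part is left undone is an outline, not a proof. (Ironically, in the prime-power case that lift is the easy step: if $c_0$ is not a $p$-th power it already has order $n$ in $F^{\times}/(F^{\times})^n$, the field $F(\sqrt[n]{c_0})\supseteq F(\sqrt[r]{c_0})$ splits $A$ and so embeds as a maximal subfield, making $A$ cyclic and hence a symbol on it. The step you treated as routine is the unproved one.)

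The paper's own proof shows that the two identities you already invoke suffice for a four-line argument carried out entirely at degree $n$, never touching $D$. Reduce to $n=p^m$ (coprime factors are recombined via the primary decomposition of symbols). Since $A$ does not split, $a\notin(F^{\times})^{p^m}$; let $s<m$ be maximal with $a\in(F^{\times})^{p^s}$ and write $a=c^{p^s}$, so $c\notin(F^{\times})^{p}$. Then $(c^{p^s},b)_n$ and $(c,b^{p^s})_n$ have the same degree and the same Brauer class (both equal $p^s$ times the class of $(c,b)_n$), hence are isomorphic, and in the new presentation the first slot $c$ generates a field. The nonsplitness hypothesis enters only to guarantee $s<m$. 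In other words, the power is moved across to the second slot at constant degree, rather than being used to descend to the underlying division algebra and climb back up --- which is exactly the detour where your argument breaks.
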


\begin{proof}
It is enough to consider the case where $n=p^m$. Now $K = F[x]$ is a field if and only if $x^n-a$ is an irreducible polynomial if and only if $a \not \in {F ^{\times}}^p$. 
Let $s=\max \{ s| a \in {F ^{\times}}^{p^s}  \}$, which is finite because $a \not \in {F ^{\times}}^{p^m}$, as $A$ does not split. Write $a=c^{p^s}$. Then by the definition of $s$ we know $c \not \in {F ^{\times}}^p$. Now write $A=(a,b)_{n}=(c^{p^s},b)_{n}=(c,b^{p^s})_{n,F}$ and now $K=F[x|x^n=c]$ is a field.
\end{proof}

We give some well known relations which will be used to prove the main theorems of this work.
\begin{prop}\label{A}
Let $A_i=(a_i,b_i)_n$ with standard generators $x_i,y_i$,  $i=1,2$, and let $\N_{F[x_i]/F}$ denote the regular field norm. 
\begin{enumerate}
\item \label{Ab} For every $k_1\in \mul{F[x_1]}$, $A_1\cong(a_1,\N_{F[x_1]/F}(k_1)b_1)$. 
\item \label{A0} If $a_1+b_1\neq 0$ then,  $A_1\cong (a_1+b_1,-a_1^{-1}b_1)_n$.
\item \label{A1} $A_1\otimes A_2\cong (a_1,b_1b_2^{-1})_n\otimes (a_1a_2,b_2)_n$.
\item \label{A2} For $k_2\in \mul{F[x_2]}$, if $t = a_1a_2+\N_{F[x_2]/F}(k_2) b_2\neq 0$ then \\ $A_1\otimes A_2\cong (a_1,*)_n \otimes (t,*)_n$. 
\end{enumerate}
\end{prop}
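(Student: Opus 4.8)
The plan is to prove each of the four identities by producing an explicit standard pair (or, in \eq{A1}, an explicit set of generators) inside the relevant algebra and then invoking the fact recorded above that a standard pair $u,v$ with $uv=\rho_n vu$ identifies its ambient symbol algebra with $(v^n,u^n)_n$. I would treat \eq{Ab}, \eq{A0} and \eq{A1} as the three building blocks and assemble \eq{A2} from them at the end.

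For \eq{Ab} I would use that conjugation by $y_1$ restricts on $F[x_1]$ to the generator $\sigma$ of $\Gal(F[x_1]/F)$ sending $x_1\mapsto\rho_n x_1$. Since $k_1$ commutes with $x_1$, the pair $u=k_1y_1$, $v=x_1$ satisfies $uv=\rho_n vu$; telescoping the conjugates gives $u^n=k_1\,\sigma(k_1)\cdots\sigma^{n-1}(k_1)\,y_1^n=\N_{F[x_1]/F}(k_1)\,b_1$, and the standard-pair fact yields $A_1\cong(a_1,\N_{F[x_1]/F}(k_1)b_1)$. For \eq{A1} I would work inside $A_1\otimes A_2$ and set $p_1=x_1$, $q_1=y_1y_2^{-1}$, $p_2=x_1x_2$, $q_2=y_2$. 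A short computation shows $q_1p_1=\rho_n p_1q_1$ and $q_2p_2=\rho_n p_2q_2$, that the pairs $\{p_1,q_1\}$ and $\{p_2,q_2\}$ commute with one another, and that $p_1^n=a_1$, $q_1^n=b_1b_2^{-1}$, $p_2^n=a_1a_2$, $q_2^n=b_2$. Since these four elements recover $x_1,x_2,y_1,y_2$ they generate $A_1\otimes A_2$, so the induced homomorphism from $(a_1,b_1b_2^{-1})_n\otimes(a_1a_2,b_2)_n$ is surjective, hence an isomorphism by comparing the common dimension $n^4$.

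The step I expect to require the most care is \eq{A0}, which I apply to $A_1$ with its generators $x_1,y_1$. For the $\rho_n$-commuting pair $x_1,y_1$ the quantum binomial theorem gives $(x_1+y_1)^n=\sum_{k}\binom{n}{k}_{\rho_n}x_1^ky_1^{n-k}$, and because $\rho_n$ is a primitive $n$-th root of unity every inner Gaussian coefficient $\binom{n}{k}_{\rho_n}$ with $0<k<n$ vanishes, leaving $(x_1+y_1)^n=a_1+b_1$; the hypothesis $a_1+b_1\neq0$ is exactly what guarantees this $n$-th power lies in $\mul{F}$. Setting $q=x_1^{-1}y_1$, an easy induction gives $(x_1^{-1}y_1)^k=\rho_n^{-\binom{k}{2}}x_1^{-k}y_1^k$, so $q^n=\rho_n^{-\binom{n}{2}}a_1^{-1}b_1$; the scalar $\rho_n^{-\binom{n}{2}}$ equals $1$ for $n$ odd and $-1$ for $n$ even, and since $-1$ is an $n$-th power whenever $n$ is odd, in all cases $q^n\equiv-a_1^{-1}b_1$ modulo $(\mul{F})^n$. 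One then checks $q(x_1+y_1)=\rho_n(x_1+y_1)q$, so $x_1+y_1$ and $q$ form a standard pair and the standard-pair fact gives $A_1\cong(a_1+b_1,-a_1^{-1}b_1)_n$.

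Finally, \eq{A2} is the composite of the three building blocks. I would first apply \eq{Ab} to $A_2$ with the element $k_2$, replacing it by $(a_2,\N_{F[x_2]/F}(k_2)b_2)$; then apply \eq{A1} to $A_1\otimes A_2$, which shifts the second tensor factor to $(a_1a_2,\N_{F[x_2]/F}(k_2)b_2)$; and finally apply \eq{A0} to that factor, whose left slot becomes $a_1a_2+\N_{F[x_2]/F}(k_2)b_2=t$. The hypothesis $t\neq0$ is precisely what licenses the last step, while the first tensor factor keeps $a_1$ in its left slot, giving $A_1\otimes A_2\cong(a_1,*)_n\otimes(t,*)_n$ as claimed.
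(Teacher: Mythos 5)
Your proof is correct, and its substantive parts coincide with the paper's own argument. For part (3) you choose exactly the same generators as the paper ($x_1$, $y_2^{-1}y_1$, $x_1x_2$, $y_2$; your $q_1=y_1y_2^{-1}$ is the paper's $v_1$, since the $y_i$ lie in different tensor factors and commute), adding only the explicit surjectivity-plus-dimension-count justification that the paper leaves implicit. For part (4) you run the identical chain: apply (1) to $A_2$ with $k_2$, then (3), then (2) to the factor $(a_1a_2,\N_{F[x_2]/F}(k_2)b_2)_n$, with $t\neq 0$ licensing the last step --- this is precisely the paper's computation. Where you genuinely add content is in parts (1) and (2): the paper dismisses these as standard relations, citing textbooks, while you prove them. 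Your proofs are sound: for (1), conjugation by $y_1$ induces the cyclic Galois action on $F[x_1]$ and the telescoped product of conjugates of $k_1$ is the norm; for (2), the Gaussian binomial coefficients at a primitive $n$-th root of unity kill the cross terms in $(x_1+y_1)^n$, and you correctly track the sign $\rho_n^{-\binom{n}{2}}$, which equals $-1$ for even $n$ (giving the slot $-a_1^{-1}b_1$ on the nose) and is absorbed into $(F^{\times})^n$ for odd $n$ --- a point that is easy to fumble. The one caveat, which you share with the paper's own formulation, is that your argument for (1) presumes $F[x_1]$ is a field; when $x^n-a_1$ is reducible one should read $\N_{F[x_1]/F}$ as the norm of the \'etale algebra $F[X]/(X^n-a_1)$, for which the same telescoping identity holds, so nothing breaks.
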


\begin{proof}  \ \\
(1)+(2) are standard relations which can be found in \cite{RB}, \cite{Jac} or \cite{GS}.\\
(3) Consider the commuting pairs $$u_1=x_1,v_1=y_2^{-1}y_1  \ \hbox{and} \ u_2=x_1x_2,v_2=y_2,$$ noting that $u_1^n=a_1$, $v_1^n=b_2^{-1}b_1$, $u_2^n=a_1a_2$, $v_2^n= b_2$.\\
(4) Combining (\ref{Ab}), (\ref{A0}) and (\ref{A1}) we have,
\begin{align*}
A_1\otimes A_2 &= (a_1,b_1)_n\otimes (a_2,b_2)_n\\
& \stackrel{(\ref{Ab})}{\cong} (a_1,b_1)_n\otimes (a_2,\N_{F[x_2]/F}(k_2) b_2)_n \\  
& \stackrel{(\ref{A1})}{\cong} (a_1,b_1(\N_{F[x_2]/F}(k_2) b_2)^{-1})_n\otimes (a_1a_2,\N_{F[x_2]/F}(k_2) b_2)_n \\
& \stackrel{(\ref{A0})}{\cong} (a_1,b_1(\N_{F[x_2]/F}(k_2) b_2)^{-1})_n\otimes (a_1a_2+\N_{F[x_2]/F}(k_2) b_2,-(a_1a_2)^{-1}\N_{F[x_2]/F}(k_2) b_2)_n\\
&=(a_1,*)_n \otimes (t,*)_n
\end{align*}
as claimed.

\end{proof}

\subsection{Severi-Brauer Varieties}
Let $A$ be a $F$-csa of degree $n$.
The Severi-Brauer variety associated to $A$ denoted $\SB(A)$, is the variety of all minimal left ideals of $A$.
The dimension of $\SB(A)$ is $n-1$.
This variety contains the splitting information for $A$ as seen from the following theorem:
\begin{thm}(\cite[Theorem 13.7]{SLN})
Let $A$ be as above. The following are equivalent:
\begin{enumerate}
\item $A\isom \M_n(F)$, i.e. $A$ is split.
\item $\SB(A)\isom \mathbb{P}^{n-1}(F)$.
\item $\SB(A)$ has a rational point.
\end{enumerate}
\end{thm}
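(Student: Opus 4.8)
The plan is to establish the cycle of implications $(1)\Rightarrow(2)\Rightarrow(3)\Rightarrow(1)$, of which the first two steps are essentially formal and the last carries the arithmetic content. The middle implication $(2)\Rightarrow(3)$ is immediate: $\mathbb{P}^{n-1}(F)$ always carries the rational point $[1:0:\cdots:0]$, and any isomorphism $\SB(A)\isom\mathbb{P}^{n-1}$ of $F$-varieties transports it to a rational point of $\SB(A)$.

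For $(1)\Rightarrow(2)$ I would compute $\SB(\M_n(F))$ directly. Writing $\M_n(F)=\operatorname{End}_F(V)$ with $V=F^n$, the standard correspondence $W\mapsto \operatorname{Hom}_F(V,W)=\{f : \operatorname{im}(f)\subseteq W\}$ is a bijection between subspaces $W\subseteq V$ and left ideals of $\operatorname{End}_F(V)$, under which the minimal nonzero left ideals correspond exactly to the lines $W\subseteq V$. Sending a line to its associated ideal thus gives a morphism $\mathbb{P}(V)=\mathbb{P}^{n-1}\to\SB(\M_n(F))$ which is bijective on points; I would then check that it is an isomorphism of varieties by realizing both sides inside the Grassmannian of $n$-dimensional subspaces of $A$ and exhibiting the inverse. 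This identifies $\SB(A)$ with $\mathbb{P}^{n-1}$ whenever $A$ is split.

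The substantive direction is $(3)\Rightarrow(1)$, which I would prove by a dimension count. A rational point of $\SB(A)$ is a left ideal $I\subseteq A$, defined over $F$, whose base change $I\otimes_F\overline{F}$ is a minimal left ideal of $A\otimes_F\overline{F}\isom\M_n(\overline{F})$; since such a minimal ideal has $\overline{F}$-dimension $n$, we get $\dim_F I=n$. On the other hand, writing $A\isom\M_r(D)$ with $D$ the underlying division algebra, $d=\ind(A)=\deg(D)$ and $rd=\deg(A)=n$, the algebra $A$ is semisimple as a left module over itself with simple summand the column module of $F$-dimension $rd^2$. Hence every nonzero left ideal of $A$ has $F$-dimension a positive multiple of $rd^2=nd$. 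Comparing, $n=\dim_F I\geq nd$ forces $d=1$, i.e.\ $D=F$ and $A\isom\M_n(F)$ is split.

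The hard part will not be the dimension arithmetic but the descent input used to describe the rational points: one must know that an $F$-point of the variety $\SB(A)$ corresponds precisely to a left ideal of $A$ that is actually defined over $F$ (equivalently, a Galois-stable minimal left ideal of $A\otimes_F\overline{F}$), and that such an ideal has $F$-dimension equal to its $\overline{F}$-dimension. This rests on the construction of $\SB(A)$ as a closed $F$-subvariety of the Grassmannian together with Galois (faithfully flat) descent for ideals, which is where I would invoke the cited reference; granting it, the three implications close up as above.
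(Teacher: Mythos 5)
Your proposal is correct, but note that the paper itself contains no proof of this statement: it is quoted as known background, with the proof deferred to the citation \cite[Theorem 13.7]{SLN}. Your sketch is essentially the standard argument from that literature: $(1)\Rightarrow(2)$ by identifying ideals of $\operatorname{End}_F(V)$ with subspaces of $V$, $(2)\Rightarrow(3)$ trivially, and $(3)\Rightarrow(1)$ by combining the descent description of $F$-points of $\SB(A)$ (an $F$-point is an $n$-dimensional left ideal of $A$ itself) with the module-theoretic dimension count; the count $n=\dim_F I\geq rd^2=nd$, forcing $d=1$, is exactly right, since every nonzero left ideal of $A\isom\M_r(D)$ is a direct sum of copies of the unique simple module $D^r$ of $F$-dimension $rd^2$.

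One slip worth fixing: with the usual convention that $\operatorname{End}_F(V)$ acts on $V$ on the left and multiplication is composition, the set $\{f:\operatorname{im}(f)\subseteq W\}=\operatorname{Hom}_F(V,W)$ is a \emph{right} ideal, not a left ideal; left ideals of $\operatorname{End}_F(V)$ are instead of the form $\{f: f|_U=0\}$ for subspaces $U\subseteq V$, so the minimal left ideals correspond to hyperplanes $U$, i.e.\ to points of the dual projective space $\mathbb{P}(V^{*})$. Since $\mathbb{P}(V^{*})\isom\mathbb{P}^{n-1}$ (and the issue disappears if one lets endomorphisms act on the right, or replaces $A$ by $A^{op}\isom\M_n(F)$), this does not affect any conclusion, but as stated the correspondence between lines and minimal left ideals is the wrong-sided one. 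With that convention straightened out, your outline is the standard proof.
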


One more important property of this variety is that its function field serves as a generic splitting field for $A$ in the following sense:
\begin{thm} (\cite[13]{SLN})
The following are equivalent:
\begin{enumerate}
\item $K$ is a splitting field for $A$.
\item There is a place $\nu:F(\SB(A))\rightarrow K$.
\end{enumerate}
\end{thm}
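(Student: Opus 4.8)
The plan is to deduce both directions from the rational-point criterion of the previous theorem, translating between $K$-rational points of $\SB(A)$ and places $F(\SB(A)) \to K$ by means of two classical facts about proper varieties and valuation rings. The preliminary observation driving everything is that $\SB(A)$ is \emph{generically split}: its generic point gives a canonical rational point of $\SB(A)$ over its own function field $F(\SB(A))$, so the previous theorem shows $A \otimes_F F(\SB(A))$ is split.

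For $(2) \Rightarrow (1)$, let $\nu$ be a place with valuation ring $\mathcal{O}_\nu \subseteq F(\SB(A))$ and residue field $k(\nu) \hookrightarrow K$; since $\nu$ is a place over $F$ we have $F \subseteq \mathcal{O}_\nu$ and $\operatorname{Frac}(\mathcal{O}_\nu) = F(\SB(A))$. As $\SB(A)$ is projective, hence proper over $F$, I would invoke the valuative criterion of properness: the canonical map $\operatorname{Spec} F(\SB(A)) \to \SB(A)$ extends uniquely to $\operatorname{Spec}\mathcal{O}_\nu \to \SB(A)$, and restricting to the closed point produces a $k(\nu)$-point of $\SB(A)$, which under $k(\nu) \hookrightarrow K$ becomes a $K$-rational point. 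The previous theorem then gives that $A \otimes_F K$ is split, i.e.\ $K$ splits $A$.

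For $(1) \Rightarrow (2)$, suppose $K$ splits $A$. By the previous theorem $\SB(A)_K$ is projective space over $K$, so $\SB(A)$ has a $K$-rational point $P$; let $x$ be its image scheme-point, so that $\kappa(x) \hookrightarrow K$ and $\mathcal{O}_{\SB(A),x}$ is a local subring of $F(\SB(A))$ with residue field $\kappa(x)$. Here I would use that $\SB(A)$ is \emph{smooth} (it is a twisted form of projective space): $x$ is then a regular point, and a regular system of parameters at $x$ defines a valuation ring of $F(\SB(A))$ dominating $\mathcal{O}_{\SB(A),x}$ with residue field exactly $\kappa(x)$. Composing its residue map with $\kappa(x) \hookrightarrow K$ yields the desired place $\nu: F(\SB(A)) \to K$.

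The main obstacle is the construction of the place in $(1) \Rightarrow (2)$: a $K$-point only directly supplies a local homomorphism $\mathcal{O}_{\SB(A),x} \to K$, and a general valuation ring dominating $\mathcal{O}_{\SB(A),x}$ can have residue field strictly larger than $\kappa(x)$, which need not embed into $K$. It is precisely the smoothness of $\SB(A)$ that lets one choose a valuation with residue field $\kappa(x)$ and so keep the place valued in $K$; dually, in $(2) \Rightarrow (1)$ the essential input is properness together with the generic splitting of $\SB(A)$, which guarantees a rational point to specialize along $\nu$.
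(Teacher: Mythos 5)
The paper gives no proof of this statement---it is quoted from Saltman's lectures \cite{SLN}---so there is no internal argument to compare yours against; it has to be judged on its own, and it is correct, being essentially the standard proof found in the literature (Saltman, and Gille--Szamuely). Your preliminary observation is right: the generic point furnishes an $F(\SB(A))$-rational point, so $A\otimes_F F(\SB(A))$ splits by the rational-point criterion. In $(2)\Rightarrow(1)$, the valuative criterion of properness applied to $\operatorname{Spec}F(\SB(A))\to \SB(A)$ over $\operatorname{Spec}\mathcal{O}_\nu\to\operatorname{Spec}F$ does extend to $\operatorname{Spec}\mathcal{O}_\nu\to\SB(A)$, whose closed point yields a $k(\nu)$-point and hence a $K$-point; note that invoking the previous theorem over $K$ tacitly uses the standard identification $\SB(A)\times_F K\cong \SB(A\otimes_F K)$, which is worth stating. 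In $(1)\Rightarrow(2)$ you isolate exactly the delicate point: Chevalley's extension theorem gives a valuation ring of $F(\SB(A))$ dominating $\mathcal{O}_{\SB(A),x}$, but its residue field may be strictly larger than $\kappa(x)$; regularity---available because $\SB(A)$ is a twisted form of $\mathbb{P}^{n-1}$, hence smooth over $F$---lets one take the composite of the divisorial valuations attached to a regular system of parameters, whose residue field is exactly $\kappa(x)\hookrightarrow K$. One small addition: the place so built is automatically an $F$-place, since $F\subseteq\mathcal{O}_{\SB(A),x}$ meets the maximal ideal trivially and so maps injectively to the residue field. With these two minor clarifications your argument is complete.
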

For more on this important variety we refer the reader to \cite{SLN} or \cite{GS}.

\subsection{$C_m$ fields}

Even though the definition of a $C_m$ field seems quite restrictive there are many interesting fields which are $C_m$. Here are some known examples:
\begin{enumerate}
\item Every algebraically closed field is $C_0$.
\item Every finite field is $C_1$. 
\item If $F$ is $C_m$ and $F\subset K$ is of transcendence degree $n$ over $F$, then $K$ is $C_{m+n}$, by \cite{Lang} completed by \cite{Nagata}.
\item The above implies that if $V$ is a variety of dimension $n$ over an algebraically closed field $F$, then the function field, $F(V)$, is~$C_n$.
\end{enumerate}


\subsection{Known results}  

%
%

Results on symbol length:
\begin{enumerate} 
	\item Every algebra of degree $2$ is isomorphic to a quaternion algebra. That is, $\len(2,2)=1$.
	\item Every algebra of degree $3$ is cyclic and thus, when $\rho_3\in F$ it is isomorphic to a symbol algebra. That is, $\len(3,3)=1$ (Wedderburn \cite{Wed21}).
	\item Every algebra of degree $4$ of exponent $2$ over a field of characteristic different from $2$ is isomorphic to a product of two quaternion algebras. That is, $\len(4,2)=2$ (Albert \cite{Al}).
	\item Every algebra of degree $8$ and exponent $2$ is similar to the product of four quaternion algebras. That is, $\len(8,2)=4$ (Tignol~\cite{T2}).
	\item Every algebra of degree $9$ and exponent $9$ over a field of characteristic different than $3$ containing $\rho_9$ is similar to the product of $35840$ symbol algebras of degree $9$ and if it is of exponent $3$ it is similar to the product of $277760$ symbol algebras of degree $3$. That is, $\len(9,9)\leq 35840$ and $\len(9,3)\leq 277760$ (Matzri~\cite{MAT}).
\item Every algebra of prime degree $p$ over a field of characteristic different from $p$ containing $\rho_p$ is similar to the tensor product  of $\frac{(p-1)!}{2}$ symbol algebras. That is, $\len(p,p)\leq \frac{(p-1)!}{2}$ (Rosset-Tate \cite{RT},\cite[7.4.11]{GS} and Rowen-Saltman \cite{RSD}).	
	
	\item Every $p$-algebra of index $p^n$ and exponent $p^m$ is similar to the product of $p^n-1$ cyclic algebras of degree $p^m$. That is,\\ $\len(p^m,p^n)=p^n-1$ (Florence \cite{MF}).
	\item If $F$ is the function field of an $l$-adic curve containing a primitive $p$-th root of one and $p$ is a prime different than $l$, then every degree $p$ algebra is cyclic. That is, $\len(p,p,F)=1$ (Saltman~\cite{Sal97a}). 
	\item If $F$ is a local or global field containing a primitive $n$-th root of~$1$, every algebra of exponent $n$ is a symbol. That is $\len(n,F)=1$ (Albert-Brauer-Hasse-Noether  \cite{A0} and \cite{BHN}).
	\item If $F$ is a $C_2$ field containing a necessary primitive root of~$1$ then, $\len(2,F)=\len(3,F)=1$ (Artin \cite{Art82}).
	\item If $F$ is the function field of an $l$-adic curve and $(n,l)=1$, then every algebra of exponent $n$ is the product of two cyclic algebras. Thus if $F$ contains a primitive $n$-th root of~$1$, $\len(n,F)=2$ (Brussel, Mckinnie and Tengan \cite{BMT}).
\end{enumerate} \ \\

Results on the exponent-index problem:\\

\begin{enumerate}
	\item For $F$ a local or global field, $\Brdim(F)=1$ (Albert-Brauer-Hasse-Noether \cite{A0} and \cite{BHN}).
	\item For $F$ a finitely generated of transcendence degree $2$ over an algebraically closed field, $\Brdim(F)=1$ (de-Jong \cite{dJ04} and Lieblich \cite[Theorem 4.2.2.3]{Lie08}).
	\item M. Artin conjectured in \cite{Art82} that $\Brdim(F)=1$ for
every $C_2$~field~$F$. He proved that $\Brdim_2(F)= \Brdim_3(F)= 1$ for such
fields. 
	\item For $F$ finitely generated and of transcendence degree 1 over an $\ell$-adic field,
$\Brdim_p(F)= 2$ for every prime $p\neq \ell$ (Saltman~\cite{Sal97a}).
\item If $F$ is a complete discretely valued field with residue field $k$ such that
$\Brdim_p(k) \leq d$ for all primes $p\neq \ch(k)$, then $\Brdim_p(F)\leq d+1$ for all
$p \neq \ch(k)$ (Harbater, Hartmann and Krashen~\cite[Theorem 5.5]{HHK09}).
	\item If $F$ has characteristic p and is finitely generated of transcendence degree $r$ over a perfect
field $k$, then $\Brdim_p(F)\leq r$, by methods of Albert (\cite[page 7]{Ketura}).

\end{enumerate}

%
%
%
%
%
%

%
%


\section{Using $n$-Kummer spaces to bound the symbol length}


As above, for $a,b \in \mul{F}$, we denote by $(a,b)_n$ the symbol algebra $F[x,y \st x^n=a, \,y^n=b\,yx=\rho xy]$.

We say that an $F$-subspace $V$ of a central simple algebra $A$ is $n$-Kummer if $v^n \in F$ for every $0\neq v \in V$. An $n$-Kummer space $V$ is endowed with the exponentiation map $$N_V \co V \ra F, \ \hbox{defined by} \ \N_V(v) = v^n$$ which is a homogeneous form of degree $n$.  
\begin{rem} \  \ \ \ \ \ \ \ \ \ \ \ \ \ \ \ \ \ \ \ \ \ \ \ \ \ \ \ \ \ \ \ \ \ \ \ \ \ \ \ \ \ \ \ \ \ \ \ \ \ \ \ \ \ \ \ \ \ \ \ \ \ \ \ \ \ \ \ \ \ \
\begin{enumerate}
\item If $\deg(A)=nm$, then $\N_V(v)^m=\Nrd_A(v)$.
\item An element $d\in A$ of degree $n$ has charachteristic polynomial $\lambda ^n-\alpha$ (thus satisfies $d^n\in F$) if and only if $tr(d^m)=0$ for all $1\leq m \leq n-1$, where $\tr(d)$ is the usual field trace (this is clear by Newton's inversion formulas).
\item If $x,y$ satisfy that $x^n,y^n\in F$ and $yx=\rho_n xy$ then, $(x+y)^n=x^n+y^n\in F$. Indeed one can check that $\Tr((x+y)^m)=0$ for all $1\leq m \leq n-1$, (which is clear as $\tr(x^iy^j)=0$ for $(i,j)\neq (0,0)$ mod $n$) thus $(x+y)^n\in F$. On the other hand when explicitly computing $(x+y)^n$ one gets $x^n+y^n+ M$ where $M$ is a sum of monomials of the form $f_{i,j} x^iy^j$ for $1\leq i,j\leq n-1$ which are linearly independent and not in $F$ thus we conclude that 
$f_{i,j}=0$ for all $1\leq i,j\leq n-1$ and $(x+y)^n=x^n+y^n\in F$. 
\end{enumerate}
\end{rem} \ \\ \\ \\ 
\textbf{Examples}\ \ \ \ \ \ \ \ \ \ \ \ \ \ \ \ \ \ \ \ \ \ \ \ \ \ \ \ \ \ \ \ \ \ \\ \ \ \ \ \ \ \ \ \ \ \ \ \ \ \ \ \ \ \ \ \ \ \ \ \ \ \ \ \ \ \ \ \ \
Consider $A=(a,b)_n$ with standard generators $x,y$.
\begin{enumerate}
\item $V_1=Fx$ and $V_2=Fy$ are by definition one dimensional $n$-Kummer spaces with norms $\N_1(fx)=f^na$ and $\N_2(fy)=f^nb$ respectively.
\item $V=Fx+Fy$ is a two dimensional $n$-Kummer space with norm \linebreak $\N_V(fx+gy)=f^na+g^nb$.
\item $V=F[x]y$ is an $n$-dimensional $n$-Kummer space with norm \linebreak $\N_V(ky)=\N_{F[x]/F}(k)b$ for $k\in F[x]$.
\item \label{SS1} $V=Fx+F[x]y$ is an $n+1$-dimensional $n$-Kummer space with norm  $\N_V(fx+ky)=f^na+\N_{F[x]/F}(k)b$ for $k\in F[x]$.
\end{enumerate}
Our objective is to find high dimensional $n$-Kummer spaces so that the $C_m$-property will ensure a non-trivial solution to the norm form. However, Example (\ref{SS1}) is maximal with respect to inclusion (\cite{ACT}) and we actually conjecture that if $A$ is a division algebra the maximal dimension of such a space is $n+1$. Thus we consider tensor products of symbol algebras.

Let $A$ be the tensor product $\otimes_{i=1} ^{t} (a_i,b_i)_n$, with the standard pairs of generators $x_i,y_i$ for the symbol algebras. Let $V_0 = F$ and for \linebreak $j = 1,\dots,t$ let $V_j\subset \otimes_{i=1} ^{j} (a_i,b_i)_n$ be defined by $$V_{j}=V_{j-1}x_{j}+F[x_{j}]y_{j}$$ (so in particular $V_1=Fx_1+F[x_1]y_1$). These are called standard \linebreak $n$-Kummer spaces.\\
Every $v_k\in V_k$ defines two vectors $\widetilde{v_k}\in V_1 \times ... \times V_k$ and $\widetilde{\N}(v_k)\in F^k$ by setting: $$\widetilde{v_k}=(v_1,...,v_k)$$ such that for each $i$, $v_i=v_{i-1}x_i+k_iy_i$ where $k_i\in F[x_i]$ 
and $$ \widetilde{\N}(v_k)=(N_1,...,N_k) $$ where $N_i=\N_{V_i}(v_i)$.

\begin{prop}\label{B}
Let $A$ and $V_1,...,V_t$ be as above.
Then
\begin{enumerate}
\item $\dim(V_j) = jn+1$.
	\item \label{B1} $V_j$ is an $n$-Kummer space for every $j\geq 0$.
	\item \label{BN} $\N_0(f)=f^n$ and for $j>0$ $$\N_{V_j}(v_{j-1}x_j+k_jy_j)=\N_{V_{j-1}}(v_{j-1})a_j+\N_{F[x_j]/F}(k_j)b_j$$
\end{enumerate}	
\end{prop}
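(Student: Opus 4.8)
The plan is to prove the three claims simultaneously by induction on $j$, since the $n$-Kummer property in (\ref{B1}) and the norm formula in (\ref{BN}) come out of a single computation. For the base case $j=0$ we have $V_0=F$, which is one-dimensional with $\N_0(f)=f^n\in F$, so all three assertions hold; the case $j=1$ just recovers the fourth Example above.

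For the inductive step I would fix an arbitrary $v_j=v_{j-1}x_j+k_jy_j\in V_j$ with $v_{j-1}\in V_{j-1}$ and $k_j\in F[x_j]$, set $u=v_{j-1}x_j$ and $w=k_jy_j$, and verify that the pair $u,w$ satisfies the hypotheses of part (3) of the Remark above, namely $u^n,w^n\in F$ together with the relation $wu=\rho_n uw$. Since $v_{j-1}$ lies in $\otimes_{i=1}^{j-1}(a_i,b_i)_n$ it commutes with every element of the $j$-th tensor factor, so $u^n=v_{j-1}^n\,x_j^n$; by the inductive hypothesis $v_{j-1}^n=\N_{V_{j-1}}(v_{j-1})\in F$ and $x_j^n=a_j$, giving $u^n=\N_{V_{j-1}}(v_{j-1})a_j$. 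The identity $w^n=\N_{F[x_j]/F}(k_j)b_j$ is the standard telescoping computation in a cyclic algebra (the third Example above), using $y_jk_j=\sigma(k_j)y_j$ for the automorphism $\sigma\co x_j\mapsto\rho_n x_j$. The commutation relation then follows by pushing $y_j$ and $k_j$ past $v_{j-1}$ (which lies in a disjoint set of tensor factors) and past each other inside $F[x_j]$, and invoking $y_jx_j=\rho_n x_jy_j$; this yields $wu=\rho_n uw$ after a short manipulation. Applying part (3) of the Remark then gives $v_j^n=(u+w)^n=u^n+w^n=\N_{V_{j-1}}(v_{j-1})a_j+\N_{F[x_j]/F}(k_j)b_j\in F$, which establishes both (\ref{B1}) and (\ref{BN}) at once.

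It remains to prove the dimension count in (1). Here I would show that the sum $V_j=V_{j-1}x_j+F[x_j]y_j$ is in fact direct: in the natural $\Z/n$-grading on the $j$-th tensor factor by powers of $y_j$, every element of $V_{j-1}x_j$ is homogeneous of $y_j$-degree $0$ while every nonzero element of $F[x_j]y_j$ has $y_j$-degree $1$, so the two summands meet only in $0$. Because multiplication by the invertible elements $x_j$ and $y_j$ is injective, $\dim(V_{j-1}x_j)=\dim(V_{j-1})=(j-1)n+1$ by induction and $\dim(F[x_j]y_j)=\dim(F[x_j])=n$, so $\dim(V_j)=jn+1$.

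The only genuine content lies in the commutation relation $wu=\rho_n uw$ and in keeping straight which elements commute across the tensor factors; once this bookkeeping is settled, part (3) of the Remark does all the work, and the $n$-Kummer property and the norm formula fall out together. I expect the telescoping computation of $w^n$ and the grading argument for the directness of the sum to be entirely routine.
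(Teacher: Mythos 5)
Your proof is correct and follows essentially the same route as the paper's: an induction in which the key step is the identity $(u+w)^n=u^n+w^n$ of Remark 3.1(3), applied to $u=v_{j-1}x_j$ and $w=k_jy_j$. The only difference is one of detail: you explicitly verify the commutation $wu=\rho_n uw$ and the directness of the sum $V_{j-1}x_j\oplus F[x_j]y_j$ for the dimension count, both of which the paper treats as clear.
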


\begin{proof} \ \\
(1) This is clear.\\
(2)+(3) For $t=0$ it is clear.

For $t>0$ we have for every $v_{t-1} \in V_{t-1}$ and $k_{t}\in F[x_t]$ that
\begin{align*} 
\N_{V_t}(v_{t-1}x_{t}+k_ty_{t}) =(v_{t-1}x_{t}+k_ty_{t})^n = (v_{t-1}x_{t})^n+(k_ty_t)^n\\
= v_{t-1}^nx_{t}^n+\N_{F[x_t]/F}(k_t)y_t^n = \N_{V_{t-1}}(v_{t-1})a_{t}+\N_{F[x_t]/F}(k_t)b_t \in F.
\end{align*}
\end{proof}	

It turns out that standard $n$-Kummer spaces are connected to presentations of $A$ as a tensor product of symbol algebras in the following way:

\begin{thm} \label{MT1}
Let $A$ and $V_1,...,V_t$ be as above.
\begin{enumerate}	
	\item \label{B2} If $v_t\in V_t$ is such that $\N_{V_t}(v_t)\neq 0$, then one can rewrite $A$ as a product of $t$ symbols with $\N_{V_t}(v_t)$ as one of the slots.
	\item \label{B3} If $v_t\in V_t$ is such that $\widetilde{\N}(v_t)\in {\mul{F}}^k$ then $A$ can be rewritten as $\otimes_{i=1} ^{t} (N_i,*)_n$.	
	\item  \label{MT13} Assume $F[x_i|x_i^n=a_i]$ is a field for each $i$. If  $\N_{V_t}(v_t) = 0$ for some nonzero $v_t\in V_t$, then $A$ can be rewritten as a product of $t-1$ symbols.
\end{enumerate}
\end{thm}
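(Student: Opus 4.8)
The plan is to prove the three parts in order, since each builds on the structural relations of Proposition~\ref{A}. For part~\eqref{B2}, the idea is to run an induction on $t$ using the recursive structure of the standard $n$-Kummer spaces. Writing $v_t = v_{t-1}x_t + k_t y_t$ with $v_{t-1}\in V_{t-1}$ and $k_t\in F[x_t]$, Proposition~\ref{B}\eqref{BN} gives $\N_{V_t}(v_t) = \N_{V_{t-1}}(v_{t-1})a_t + \N_{F[x_t]/F}(k_t)b_t$. The strategy is to treat the first $t-1$ tensor factors as a single symbol with first slot $\N_{V_{t-1}}(v_{t-1})$: by induction one can rewrite $\otimes_{i=1}^{t-1}(a_i,b_i)_n$ as a product of $t-1$ symbols, one of whose slots is $\N_{V_{t-1}}(v_{t-1})$. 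Then I would apply Proposition~\ref{A}\eqref{A2} to this distinguished symbol together with $(a_t,b_t)_n$, with the parameter $k_2$ chosen to be $k_t$, so that the resulting $t=a_1a_2+\N(k_2)b_2$ becomes exactly $\N_{V_t}(v_t)$. This places $\N_{V_t}(v_t)$ as a slot of a symbol while keeping the total count at $t$.

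For part~\eqref{B3}, the plan is to iterate part~\eqref{B2}. Since $\widetilde{\N}(v_t) = (N_1,\dots,N_t)\in (\mul{F})^t$, every intermediate norm $N_i = \N_{V_i}(v_i)$ is nonzero. I would peel off symbols one factor at a time: using Proposition~\ref{A}\eqref{A2} as in part~\eqref{B2} produces a splitting into $(a_1,*)_n\otimes(\N_{V_t}(v_t),*)_n$-type pieces, and because each $N_i$ is a unit the procedure can be repeated on the remaining factors. The bookkeeping here is to verify that the norms appearing at each stage of the peeling are precisely the components $N_i$ of $\widetilde{\N}(v_t)$, which follows from the recursive definition $v_i = v_{i-1}x_i + k_iy_i$ and the norm formula in Proposition~\ref{B}\eqref{BN}.

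For part~\eqref{MT13}, which I expect to be the main obstacle, the hypothesis is $\N_{V_t}(v_t)=0$ for some nonzero $v_t$, and the goal is to drop to $t-1$ symbols. Here I would look at the largest index $i$ for which $v_i\neq 0$; reindexing, assume $v_t\neq 0$ but $\N_{V_t}(v_t)=0$, so $\N_{V_{t-1}}(v_{t-1})a_t = -\N_{F[x_t]/F}(k_t)b_t$. The key case split is on whether $v_{t-1}=0$. If $v_{t-1}=0$, then $v_t = k_ty_t$ with $\N_{F[x_t]/F}(k_t)b_t=0$; since $b_t\neq 0$ this forces $\N_{F[x_t]/F}(k_t)=0$, but $F[x_t]$ is a field, so $k_t=0$ and hence $v_t=0$, a contradiction. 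Thus $v_{t-1}\neq 0$. Now the relation $\N_{V_{t-1}}(v_{t-1})a_t = -\N_{F[x_t]/F}(k_t)b_t$ exhibits $a_t$ (up to the nonzero factor $\N_{V_{t-1}}(v_{t-1})$, and using that $k_t\neq 0$ forces $v_{t-1}\neq 0$ to be compatible) as a norm from $F[x_t]$ times $b_t$; applying Proposition~\ref{A}\eqref{Ab} to absorb the norm should split the $t$-th symbol, reducing $A$ to $t-1$ symbols. The delicate point is handling the subcase where $\N_{V_{t-1}}(v_{t-1})=0$ as well (so that $v_{t-1}$ is an isotropic vector for the norm of $V_{t-1}$): here one recurses, and I expect the cleanest formulation to be an induction on $t$ where the inductive hypothesis of part~\eqref{MT13} itself handles the lower-dimensional isotropic vector, feeding the reduction downward until a slot can be split off via Proposition~\ref{A}. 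Making the recursion and the splitting argument fit together cleanly, and verifying that the field hypothesis on each $F[x_i]$ is genuinely used to rule out degenerate vanishing, is where the real care is required.
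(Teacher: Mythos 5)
Your proposal contains two genuine gaps, one in part (\ref{B2}) and one in part (\ref{MT13}); your part (\ref{B3}) and your recursive treatment of the isotropic subcase of (\ref{MT13}) are sound.

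In part (\ref{B2}), your induction silently assumes $\N_{V_{t-1}}(v_{t-1})\neq 0$: the inductive hypothesis produces a presentation of $A'=\otimes_{i=1}^{t-1}(a_i,b_i)_n$ having $\N_{V_{t-1}}(v_{t-1})$ in a slot only when that scalar is nonzero. But $\N_{V_t}(v_t)\neq 0$ does not force $\N_{V_{t-1}}(v_{t-1})\neq 0$, since $\N_{V_t}(v_t)=\N_{V_{t-1}}(v_{t-1})a_t+\N_{F[x_t]/F}(k_t)b_t$ can be nonzero while its first summand vanishes, so your induction step does not go through as stated. The paper's proof splits into two cases: when $\N_{V_{t-1}}(v_{t-1})=0$ one has $\N_{V_t}(v_t)=\N_{F[x_t]/F}(k_t)b_t$, and Proposition \ref{A}(\ref{Ab}) applied to the last factor alone gives $(a_t,b_t)_n\isom (a_t,\N_{V_t}(v_t))_n\isom (\N_{V_t}(v_t),a_t^{-1})_n$, placing the slot without invoking the induction at all; only when $\N_{V_{t-1}}(v_{t-1})\neq 0$ does one use the inductive hypothesis together with \ref{A}(\ref{A2}). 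You need both cases.

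In part (\ref{MT13}), the concrete reduction you describe fails. Absorbing the norm by \ref{A}(\ref{Ab}) turns the last symbol into $(a_t,\N_{F[x_t]/F}(k_t)b_t)_n=(a_t,-\N_{V_{t-1}}(v_{t-1})a_t)_n$, which is Brauer-equivalent to $(a_t,\N_{V_{t-1}}(v_{t-1}))_n$ because $(a_t,-a_t)_n$ splits; the factor $\N_{V_{t-1}}(v_{t-1})$, which you propose to work ``up to,'' is an arbitrary element of $\mul{F}$ (not an $n$-th power) and cannot be discarded, so this symbol need not split and you are still left with $t$ symbols. The step that actually makes the reduction work is a merge with the neighboring symbol: since one may assume $\N_{V_{t-1}}(v_{t-1})\neq 0$ (by your recursion, or the paper's choice of a minimal $t'$ admitting an isotropic vector), part (\ref{B2}) rewrites $A'$ with last factor $(\N_{V_{t-1}}(v_{t-1}),d_{t-1})_n$, and then \ref{A}(\ref{A1}) gives
$$(\N_{V_{t-1}}(v_{t-1}),d_{t-1})_n\otimes(a_t,\N_{F[x_t]/F}(k_t)b_t)_n\isom(\N_{V_{t-1}}(v_{t-1}),*)_n\otimes(\N_{V_{t-1}}(v_{t-1})a_t,\N_{F[x_t]/F}(k_t)b_t)_n,$$
where the second factor is of the form $(c,-c)_n$ because its slots sum to $\N_{V_t}(v_t)=0$, hence it splits. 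The splitting occurs for the merged symbol, not for $(a_t,\cdot)_n$ by itself; this merge-then-split move is the idea missing from your argument. (Your recursive handling of the subcase $\N_{V_{t-1}}(v_{t-1})=0$, applying part (\ref{MT13}) inductively to the first $t-1$ factors, is a perfectly good substitute for the paper's minimality device.)
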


\begin{proof}  \ \ \ \ \ \ \ \ \ \ \ \ \ \ \ \ \ \ \ \ \ \ \ \ \ \ \ \ \ \ \ \ \ \ \ \ \ \ \ \ \ \ \ \ \ \ \ \ \ \ \ \ \ \ \ \ \ \ \ \ \ \ \ \ \ \ \ \ \ \ \ \ \ \ \ \ \
\begin{enumerate}	

	\item 	For $t=1$ write $v=cx_1+ky_1$ for $c \in F$ and $k\in F[x_1]$. The elements $u = x_1^{-1}ky_1$ and $v$ satisfy $uv=\rho vu$, so $$A \isom (v^n,u^n)_n = (\N_{V_1}(v),u^n)_n.$$
	
	For $t>1$, let $A' = (a_1,b_1)_n \otimes \cdots \otimes (a_{t-1},b_{t-1})$ so that \\ $A = A' \otimes (a_t,b_t)$. Let $v_t\in V_{t}$ be such that $\N_{V_t}(v_t) \neq 0$. 
	Write $v_t=v_{t-1}x_{t}+k_ty_{t}$ for $v_{t-1} \in V_{t-1}$ and $k_t\in F[x_{t}]$.
	We know that $\N_{V_t}(v_t)=v_t^n=\N_{V_{t-1}}(v_{t-1})a_{t}+\N_{F[x_t]/F}(k_t)b_{t}\neq 0$. \\There are now two cases:\\
	If $\N_{V_{t-1}}(v_{t-1})=0$ then $\N_{V_t}(v_t)=\N_{F[x_t]/F}(k)b_{t}$, so 
	\begin{align*}
		A &= A' \otimes (a_{t},b_{t})_n \isom A'\otimes (a_{t},\N_{F[x_t]/F}(k_t)b_{t})_n\\
		&=A' \otimes (\N_{V_t}(v_t),a_{t}^{-1})_n,
	\end{align*}
	as claimed. \\
 If $\N_{V_{t-1}}(v_{t-1}) \neq 0$ then by induction we can write $A'$ as a product of $t-1$ symbols of which the last is $(\N_{V_{t-1}}(v_{t-1}),d_{t-1})$ for some $d_{t-1} \in \mul{F}$. 
Now apply proposition \ref{A}(\ref{A2}) to the algebra $$(\N_{V_{t-1}}(v_{t-1}),d_{t-1})_n \otimes (a_{t},b_{t})_n$$ writing it as a tensor product of two symbols where one of the slots is $\N_{V_{t-1}}(v_{t-1})a_{t}+\N_{F[x_t]/F}(k) b_{t} = \N_{V_t}(v_t)$ and we are done.

\item This is analogous to (\ref{B2}), noting that in the last step where we apply  proposition \ref{A}(\ref{A2}) to the algebra $$(\N(v_{t-1}),d_{t-1})_n \otimes (a_{t},b_{t})_n$$ we get that it is isomorphic to $(\N(v_{t-1}),*)_n \otimes (\N(v_t),*)_n$.
\item  Let $t'$ be minimal with respect to the property that $\N$ has a nontrivial zero on $V_{t'}$. Reducing the length of the product of the first $t'$ symbols, we may assume that $t'=t$. Let $A'$ be the product of the first $t-1$ symbols, as before. Let $0\neq v\in V_t$ with $\N(v)=0$. Write $v=v_{t-1}x_t+ky_t$ where $v_{t-1}\in V_{t-1}$ and $k\in F[x_t]$. If $v_{t-1}=0$ then we would have $v_t = k y_t$. Then $0 = \N(v_t)=\N_{F[x_t]/F}(k)b_t$ forces $k = 0$ since $F[x_t]$ is a field, and thus $v = 0$, contrary to assumption. So we may assume $v_{t-1}\neq 0$, and then $\N(v_{t-1})\neq 0$, by minimality, so by part $(1)$ of this proposition we may write this algebra as a product of $t-1$ symbols where the final symbol is $(\N(v_{t-1}),d_{t-1})$ for some $d_{t-1} \in \mul{F}$.\\
 \ \\ 
By \ref{A}(\ref{A1}), \\ 
$(\N(v_{t-1}),d_{t-1})_n\otimes(a_t,b_t)_n \isom (\N(v_{t-1}),*)_n \otimes (\N(v_{t-1}) a_t, \N_{F[x_t]/F}(k)b_t)_n$. \\
But $(\N(v_{t-1}) a_t, \N_{F[x_t]/F}(k)b_t)_n$ splits since $$\N(v_{t-1}) a_t + \N_{F[x_t]/F}(k)b_t = \N(v)=0$$ and $(c,-c)_n$ is split for every $c \in \mul{F}$.

\end{enumerate}

\end{proof}
%

%

\begin{thm}\label{MT2}
Let $F$ be a field containing all $n$-th roots of unity, with the property that every homogeneous equation of degree $n$ in $f(n)$ variables has a non-trivial solution.
Then every $A\in \Br(F)$ of exponent $n$ is similar to the product of at most $s=\left\lceil \frac{f(n)}{n} \right\rceil-1$ symbols of degree $n$.
\end{thm}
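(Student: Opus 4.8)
Let me plan a proof.

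---

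The plan is to combine the Merkurjev-Suslin theorem with the structural results of Theorem~\ref{MT1}, using the $C_m$-style hypothesis on $F$ as the engine that forces symbol reduction. The overall strategy is a descent on the number of symbols: start with an arbitrary presentation of $A$ as a product of symbols, and repeatedly apply the hypothesis to shorten it whenever the length exceeds the target bound $s = \lceil f(n)/n \rceil - 1$.

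First I would invoke Merkurjev-Suslin to write $A \sim \otimes_{i=1}^{t}(a_i,b_i)_n$ for some $t$, since $F$ contains all $n$-th roots of unity. I would then assume, for contradiction or as the inductive case, that $t > s$, i.e.\ $t \geq \lceil f(n)/n \rceil$. By Proposition~\ref{P} I may modify each symbol so that $F[x_i \mid x_i^n = a_i]$ is a field (discarding any symbol that splits, which only decreases the length and is harmless). Now form the standard $n$-Kummer space $V_t$ from Proposition~\ref{B}; its dimension is $\dim(V_t) = tn + 1$. The norm form $\N_{V_t}\co V_t \to F$ is a homogeneous form of degree $n$ in $tn+1$ variables.

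The key counting step is to check that $tn + 1 > f(n)$ when $t \geq \lceil f(n)/n \rceil$: indeed $tn \geq n\lceil f(n)/n \rceil \geq f(n)$, so $tn + 1 \geq f(n) + 1 > f(n)$. Thus the form $\N_{V_t}$ has strictly more than $f(n)$ variables, and the hypothesis on $F$ guarantees a nontrivial zero $0 \neq v_t \in V_t$ with $\N_{V_t}(v_t) = 0$. Since each $F[x_i]$ is a field, Theorem~\ref{MT1}(\ref{MT13}) applies and rewrites $A$ as a product of $t-1$ symbols of degree $n$. I would then iterate this reduction — each step strictly lowers $t$ while preserving the hypotheses (the new presentation still consists of degree-$n$ symbols, and Proposition~\ref{P} can be reapplied) — until $t \leq s$, at which point no further reduction is guaranteed and we stop.

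The main obstacle, and the point requiring the most care, is the bookkeeping that keeps the inductive hypothesis intact across reductions: after applying Theorem~\ref{MT1}(\ref{MT13}) I obtain \emph{some} new presentation with fewer symbols, but I must re-establish that each slot $a_i$ can be arranged (via Proposition~\ref{P}) so that $F[x_i]$ is a field before building the next $V_{t-1}$, and I must confirm the dimension count $\dim(V_{t-1}) = (t-1)n+1$ still exceeds $f(n)$ whenever $t-1 > s$. Because the descent terminates precisely when $t = s$ — where $sn + 1 = (\lceil f(n)/n\rceil - 1)n + 1$ may no longer exceed $f(n)$ — the stopping bound $s$ is exactly the threshold at which the variable count first fails to beat $f(n)$, so the arithmetic of the ceiling function must be pinned down carefully to ensure we reduce whenever $t > s$ and claim nothing more.
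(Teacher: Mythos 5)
Your proposal is correct and follows essentially the same route as the paper: apply Merkurjev--Suslin, use Proposition~\ref{P} to make each $F[x_i]$ a field, build the standard $n$-Kummer space whose norm form is homogeneous of degree $n$ in $tn+1 > f(n)$ variables, and invoke Theorem~\ref{MT1}(\ref{MT13}) to drop one symbol, iterating until the length is at most $s=\left\lceil \frac{f(n)}{n} \right\rceil-1$. The only cosmetic difference is that the paper states the reduction once for a product of exactly $s+1$ symbols and lets the descent be implicit, whereas you run the descent from arbitrary $t>s$; the arithmetic and the bookkeeping (reapplying Proposition~\ref{P} after each reduction, discarding split symbols) are the same in both.
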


\begin{proof}
We will show that every product of $s+1$ symbols of degree $n$ is similar to the product of $s$ symbols of degree~$n$ and the theorem will follow after applying the Merkurjev-Suslin theorem.
Let $B=\prod_{i=1}^{s+1} (a_i,b_i)_n$. First we use \ref{P}, so we assume $F[x_i]$ is a field for all $i$.
By \ref{B}(\ref{B1}) we have $V_{s+1}\subset B$, which is a linear space of dimension $(s+1)n +1$ and the norm form on it is homogeneous of degree $n$. But $(s+1)n +1\geq f(n)+1>f(n)$ thus there exist a non zero $v\in V_{s+1}$ such that $\N(v)=0$.
Thus applying \ref{MT1} we get that $B$ is similar to the product of at most $s=\left\lceil \frac{f(n)}{n} \right\rceil-1$ symbols.

\end{proof}

\begin{thm}\label{MTP}
Let $F$ be a $C_m$ field containing all $n$-th roots of unity and $A\in\Br_n(F)$ be of exponent $n$. Then, 

\label{C1} $$\len(n,A)\leq n^{m-1}-1$$ that is $\len(n,F)\leq n^{m-1}-1$.

\end{thm}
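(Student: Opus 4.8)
The plan is to reuse the symbol-shortening mechanism of Theorem~\ref{MT1} together with the standard $n$-Kummer spaces of Proposition~\ref{B}, inserting the $C_m$ hypothesis at a single, decisive step. By the Merkurjev--Suslin theorem $A$ is similar to some finite tensor product of symbol algebras of degree $n$, so it will suffice to show that any product $B=\otimes_{i=1}^{t}(a_i,b_i)_n$ with $t\geq n^{m-1}$ can be rewritten using only $t-1$ symbols of degree $n$. Iterating this reduction will then bring the number of symbols down to exactly $n^{m-1}-1$ and stop there.

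To carry out the reduction step I would first apply Proposition~\ref{P} to each tensor factor, arranging that $F[x_i]$ is a field for every $i$; this is precisely the hypothesis required to invoke part~(\ref{MT13}) of Theorem~\ref{MT1}. Next I would form the standard $n$-Kummer space $V_t\subset B$ of Proposition~\ref{B}, whose dimension by part~(1) is
$$\dim V_t = tn+1 = n^{m-1}\cdot n+1 = n^m+1,$$
and on which, by part~(\ref{B1}), the exponentiation map $\N_{V_t}$ is a homogeneous form of degree $n$ in $n^m+1$ variables. The crux is then a single appeal to the defining property of a $C_m$ field: since $\dim V_t = n^m+1 > n^m$, that form must have a nontrivial zero $0\neq v\in V_t$. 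With each $F[x_i]$ a field, Theorem~\ref{MT1}(\ref{MT13}) converts this zero into a presentation of $B$ as a product of $t-1=n^{m-1}-1$ symbols, as required; feeding the Merkurjev--Suslin presentation of $A$ into this and iterating yields $\len(n,A)\leq n^{m-1}-1$, and hence $\len(n,F)\leq n^{m-1}-1$.

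The one point that needs care, and the reason the bound is $n^{m-1}-1$ rather than $n^{m-1}$, is the off-by-one hidden in the strict inequality of the $C_m$ definition: I must observe that the dimension $n^m+1$ of $V_{n^{m-1}}$ already \emph{strictly} exceeds the threshold $n^m$, so that $t=n^{m-1}$ symbols suffice to force a zero. Applying Theorem~\ref{MT2} verbatim with $f(n)=n^m+1$ would give only $s=\lceil (n^m+1)/n\rceil-1=n^{m-1}$; the sharper count saves exactly one symbol by using the strict inequality $>n^m$ directly in place of the rounded quantity $\lceil f(n)/n\rceil$. There is no further obstacle, as all the structural work has already been done in Theorem~\ref{MT1}.
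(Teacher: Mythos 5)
Your proof is correct and takes essentially the same route as the paper: Merkurjev--Suslin, Proposition~\ref{P} to make each $F[x_i]$ a field, the standard $n$-Kummer space $V_t$ of dimension $tn+1$ with its degree-$n$ norm form, the $C_m$ property to produce a nontrivial zero, and Theorem~\ref{MT1}(\ref{MT13}) to drop one symbol. In fact your handling of the off-by-one is more careful than the paper's own wording, which cites Theorem~\ref{MT2} even though plugging in $f(n)=n^m+1$ there would only give $n^{m-1}$ symbols, and then computes $\frac{n^m}{n}-1=n^{m-1}-1$; the argument the paper intends is exactly the one you spell out, namely that $\dim V_{n^{m-1}}=n^m+1$ strictly exceeds $n^m$, so the strict inequality in the $C_m$ definition is exactly compensated by the $+1$ in the dimension of the Kummer space.
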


\begin{proof}
Any $C_m$ field satisfies the property that every homogeneous equation of degree $n$ in more then $n^m$ variables has a solution.
Thus by \ref{MT2} every $A$ of exponent $n$ is similar to the product of at most $\len(n,F)\leq \frac{n^m}{n}-1=n^{m-1}-1$ symbols of degree $n$, proving the theorem.
\end{proof}



\section{Improving the result for non-prime exponent}
It is a standard fact that we have a primary decomposition for the Brauer group, that is, if $\exp(A)=n=\prod_{i=1} ^t p_i^{n_i}$ where $p_1,...,p_t$ are different primes, then $A=\prod_{i=1} ^t A_i$ where $A_i$ is of exponent $p_i ^{n_i}$.
The first improvement comes from writing each of the $A_i$ as a product of symbols.
\begin{prop}
Assume $(n_1,n_2)=1$ and set $\rho_{n_1}=\rho_{n_1n_2}^{n_2}$ and $\rho_{n_2}~=~\rho_{n_1n_2}^{n_1}$. Then $(a_1,b_1)_{n_1}\otimes(a_2,b_2)_{n_2}\cong (a_1^{n_2}a_2^{n_1},b_1^{n_2k}b_2^{n_1s})_{n_1n_2}$ where $$sn_1+kn_2=1 \ \hbox{mod} \ n_1n_2.$$

\end{prop}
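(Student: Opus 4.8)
The plan is to exhibit an explicit standard pair of generators for the target symbol algebra inside the tensor product, and then conclude by simplicity together with a dimension count. Write $(a_1,b_1)_{n_1}$ with standard generators $x_1,y_1$ and $(a_2,b_2)_{n_2}$ with standard generators $x_2,y_2$, recalling that in the tensor product the first pair commutes with the second. Set $\rho=\rho_{n_1n_2}$, so that by the stated normalization $y_1x_1=\rho^{n_2}x_1y_1$ and $y_2x_2=\rho^{n_1}x_2y_2$. The candidate generators are $u=x_1x_2$ and $v=y_1^{k}y_2^{s}$, where $s,k$ are the coefficients with $sn_1+kn_2\equiv 1\pmod{n_1n_2}$.

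First I would verify the two ``norm'' relations. Since $x_1$ and $x_2$ commute, $u^{n_1n_2}=x_1^{n_1n_2}x_2^{n_1n_2}=(x_1^{n_1})^{n_2}(x_2^{n_2})^{n_1}=a_1^{n_2}a_2^{n_1}$, and likewise, since $y_1$ and $y_2$ commute, $v^{n_1n_2}=y_1^{kn_1n_2}y_2^{sn_1n_2}=b_1^{n_2k}b_2^{n_1s}$; both lie in $\mul{F}$ and match the two slots of the asserted symbol.

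The crux is the commutation relation, which is where the congruence enters. Moving $v$ past $u$ and using that $y_1$ commutes with $x_2$ and $y_2$ with $x_1$, each transposition of a $y_i$ past an $x_i$ contributes a power of $\rho$, and one finds $vu=\rho^{\,n_2k+n_1s}\,uv$. Thus the requirement $vu=\rho\,uv$ that makes $u,v$ a standard pair for $(u^{n_1n_2},v^{n_1n_2})_{n_1n_2}$ is precisely $n_2k+n_1s\equiv 1\pmod{n_1n_2}$, which is exactly how $k,s$ were chosen. The main point to get right here is the bookkeeping of the $\rho$-powers, and in particular confirming that the two contributions combine into the single congruence modulo $n_1n_2$ rather than forcing two separate conditions modulo $n_1$ and $n_2$.

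Finally I would assemble the isomorphism. The three relations just checked give a well-defined $F$-algebra homomorphism from $(a_1^{n_2}a_2^{n_1},b_1^{n_2k}b_2^{n_1s})_{n_1n_2}$ into $(a_1,b_1)_{n_1}\tensor(a_2,b_2)_{n_2}$ sending the standard generators to $u$ and $v$. Since the source is a central simple algebra its kernel is trivial, so the map is injective; as both algebras have $F$-dimension $(n_1n_2)^2$, it is an isomorphism. I expect no serious obstacle beyond the commutation computation; the only auxiliary point needing a word of care is that $\rho^{n_2}$ and $\rho^{n_1}$ are genuine primitive $n_1$-th and $n_2$-th roots of unity, which holds because $\gcd(n_1,n_2)=1$.
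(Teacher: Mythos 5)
Your proposal is correct and follows essentially the same route as the paper: the paper's proof exhibits exactly the same candidate pair $u=x_1x_2$, $v=y_1^{k}y_2^{s}$, verifies $u^{n_1n_2}=a_1^{n_2}a_2^{n_1}$, $v^{n_1n_2}=b_1^{n_2k}b_2^{n_1s}$, and computes $vu=\rho_{n_1}^{k}\rho_{n_2}^{s}uv=\rho_{n_1n_2}^{kn_2+sn_1}uv=\rho_{n_1n_2}uv$. The only difference is that you spell out the concluding step (the induced map from the symbol algebra is injective by simplicity and an isomorphism by dimension count), which the paper leaves implicit via its earlier remark that any standard pair of generators yields such an isomorphism.
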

\begin{proof}
Let $x_i,y_i$ be the standard generators for $(a_i,b_i)_{n_i}$, $i=1,2$.
Consider the elements $u=x_1x_2, v=y_1^{k}y_2^{s}$ and compute:
$u^{n_1n_2}=a_1^{n_2}a_2^{n_1}$, $v^{n_1n_2}=b_1^{n_2k}b_2^{n_1s}$ and $vu=y_1^ky_2^sx_1x_2=\rho_{n_1}^kx_1y_1^ky_2^sx_2=\rho_{n_1}^k\rho_{n_2}^sx_1x_2y_1^ky_2^s=\rho_{n_1n_2}^{kn_2+sn_1}uv=\rho_{n_1n_2}uv$.
Thus the proposition is proved.
\end{proof}

\begin{cor}
If $A$ is as above, then $\len(n,A) \leq \max\{\len(p_i ^{n_i},A_i)\}$.
\end{cor}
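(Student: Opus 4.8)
The plan is to combine the primary decomposition $A\sim\bigotimes_{i=1}^t A_i$ with the previous Proposition, which fuses two symbols of coprime degree into a single symbol of the product degree. Write $\ell_i=\len(p_i^{n_i},A_i)$ and $\ell=\max_i\ell_i$; the goal is to exhibit $A$ as a product of $\ell$ symbols of degree $n$. First I would express each primary component as a product of symbols, $A_i\sim\bigotimes_{j=1}^{\ell_i}(a_{ij},b_{ij})_{p_i^{n_i}}$, which is legitimate because $\rho_n\in F$ supplies each $\rho_{p_i^{n_i}}$. To make all the lengths agree, I would pad each short component with $\ell-\ell_i$ split symbols $(1,1)_{p_i^{n_i}}$; these are Brauer-trivial, so the class of $A_i$ is unchanged, and now each $A_i$ is a product of exactly $\ell$ symbols of degree $p_i^{n_i}$.

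Next I would substitute into the primary decomposition and reorder the resulting double tensor product by the inner index $j$:
$$A \sim \bigotimes_{i=1}^t \bigotimes_{j=1}^{\ell} (a_{ij},b_{ij})_{p_i^{n_i}} \isom \bigotimes_{j=1}^{\ell}\Big(\bigotimes_{i=1}^t (a_{ij},b_{ij})_{p_i^{n_i}}\Big).$$
For each fixed $j$ the inner product is a tensor product of $t$ symbols whose degrees $p_1^{n_1},\dots,p_t^{n_t}$ are pairwise coprime. Applying the previous Proposition repeatedly — first merging the degree-$p_1^{n_1}$ and degree-$p_2^{n_2}$ factors into one symbol of degree $p_1^{n_1}p_2^{n_2}$, then merging that with the next factor, and so on — collapses the inner product into a single symbol of degree $\prod_i p_i^{n_i}=n$. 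Hence $A$ is similar to a product of $\ell$ symbols of degree $n$, which gives $\len(n,A)\leq\ell=\max_i\len(p_i^{n_i},A_i)$.

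The only step requiring genuine care is the inductive merging: I must check that after each application of the Proposition the accumulated degree remains coprime to the degrees of the factors not yet touched, so that the Proposition keeps applying. This holds because the $p_i$ are distinct primes, so any partial product $\prod_{i\in S}p_i^{n_i}$ is coprime to $p_k^{n_k}$ for $k\notin S$. The remaining ingredients — that padding by split symbols leaves each Brauer class fixed, and that the reordering of tensor factors is an isomorphism — are routine.
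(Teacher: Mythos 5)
Your proof is correct and is precisely the argument the paper intends: the corollary is stated right after the coprime-degree merging Proposition and left without proof, because it follows by exactly the padding-and-iterated-merging argument you spell out. The two points needing care (that padding by split symbols $(1,1)_{p_i^{n_i}}$ preserves the Brauer class, and that partial products $\prod_{i\in S}p_i^{n_i}$ stay coprime to the remaining degrees so the Proposition keeps applying) are both handled correctly.
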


Thus it is better to consider the case where $n=p^t$ for a prime~$p$.

Next we are going to use the well known divisibility of symbol algebras to further improve the our result on symbol length.

\begin{prop} "Divisibility of Symbols" \cite[page 537]{RB} \\
If $A=(\alpha,\beta)_s$ then $A\sim (\alpha,\beta)_{sk} ^k$, assuming $\rho_{sk}\in F$.
\end{prop}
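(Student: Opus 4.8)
The plan is to pass to Brauer classes and compute both sides through the cyclic-algebra description of a symbol, so that the factor $k$ appears transparently. The statement asserts the equality $[(\alpha,\beta)_s]=[(\alpha,\beta)_{sk}]^k$ in $\Br(F)=H^2(F,\mul{\overline F})$. Since $yxy^{-1}=\rho_n x$ in $(\alpha,\beta)_n$, we have $(\alpha,\beta)_n\isom(F(\sqrt[n]{\alpha})/F,\sigma,\beta)$ with $\sigma(\sqrt[n]{\alpha})=\rho_n\sqrt[n]{\alpha}$, and its Brauer class is the cup product $\delta(\chi_{\alpha,n})\cup\beta\in H^2(F,\mul{\overline F})$, where $\chi_{\alpha,n}\in H^1(F,\mathbb{Q}/\mathbb{Z})$ is the character attached to $\alpha$ and $\rho_n$, $\delta\colon H^1(F,\mathbb{Q}/\mathbb{Z})\to H^2(F,\mathbb{Z})$ is the Bockstein of $0\to\mathbb{Z}\to\mathbb{Q}\to\mathbb{Q}/\mathbb{Z}\to0$, and $\beta\in\mul F=H^0(F,\mul{\overline F})$. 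This standard cohomological formula holds for all $\alpha,\beta$ with no irreducibility hypothesis, so I need not first invoke Proposition~\ref{P}.

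The heart of the matter is the relation between the two characters $\chi_{\alpha,s}$ and $\chi_{\alpha,sk}$ forced by the normalization $\rho_s=\rho_{sk}^k$. Fix a root $\theta=\alpha^{1/sk}$ and set $\theta^k=\alpha^{1/s}$, so the two radicals are chosen compatibly. For $\sigma\in\Gal(\overline F/F)$ write $\sigma(\theta)=\rho_{sk}^{\,c(\sigma)}\theta$, so that $\chi_{\alpha,sk}(\sigma)=c(\sigma)/sk \bmod\mathbb{Z}$. Then $\sigma(\theta^k)=\rho_{sk}^{\,kc(\sigma)}\theta^k=\rho_s^{\,c(\sigma)}\theta^k$, whence $\chi_{\alpha,s}(\sigma)=c(\sigma)/s=k\cdot\bigl(c(\sigma)/sk\bigr)$ in $\mathbb{Q}/\mathbb{Z}$. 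Thus $\chi_{\alpha,s}=k\,\chi_{\alpha,sk}$: carrying the entire degree-dependence on the single slot $\alpha$ makes the character scale linearly in $k$.

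Since $\delta$ is additive and the cup product is bilinear, the conclusion then follows at once:
\[ [(\alpha,\beta)_s]=\delta(\chi_{\alpha,s})\cup\beta=k\bigl(\delta(\chi_{\alpha,sk})\cup\beta\bigr)=k\,[(\alpha,\beta)_{sk}]=[(\alpha,\beta)_{sk}]^k, \]
that is, $A\sim(\alpha,\beta)_{sk}^k$. The one point to watch — and the reason I prefer the cyclic-algebra formula to a direct comparison of the two-slot Kummer symbols in $H^2(F,\mu_n^{\otimes2})$ — is the Tate twist: a naive coefficient-map chase through $\mu_s^{\otimes2}\to\mu_{sk}^{\otimes2}$ scales by $k^2$ and, after undoing the two identifications $\mu_n^{\otimes2}\isom\mu_n$, only recovers the identity up to multiplication by $k$. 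Routing the computation through the character avoids this slack and yields the exact equality, which is the subtle part of the argument.
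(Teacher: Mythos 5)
Your proof is correct. A point of order first: the paper itself gives no argument for this proposition --- it is quoted verbatim from Rowen \cite[page 537]{RB} --- so there is no internal proof to compare against, and your cohomological derivation serves as a legitimate self-contained substitute. The essential content of your argument, namely the identity $\chi_{\alpha,s}=k\,\chi_{\alpha,sk}$ under the compatible normalization $\rho_s=\rho_{sk}^k$, combined with additivity of the Bockstein and bilinearity of the cup product, is precisely the character-level form of the classical algebra-theoretic proof, which shows $(\chi,\beta)^{\otimes k}\sim(k\chi,\beta)$ for cyclic algebras by passing from the cyclic extension attached to $\chi$ to the subfield fixed by $\sigma^k$; the two routes are the same mechanism in different clothing, and yours has the advantage of making the bookkeeping one-line. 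Two details you handle well deserve mention. First, you make explicit the normalization $\rho_s=\rho_{sk}^k$, which the paper's statement (and Rowen's) leaves implicit, but without which the assertion is literally false --- an incompatible choice of $\rho_s$ only recovers $(\alpha,\beta^{j})_s$ for a suitable unit $j$ modulo $s$. Second, you correctly observe that the class formula $[(\alpha,\beta)_n]=\delta(\chi_{\alpha,n})\cup(\beta)$ requires no irreducibility of $x^n-\alpha$ (the symbol algebra is Brauer-equivalent to the cyclic algebra of the possibly smaller-order character $\chi_{\alpha,n}$), so Proposition~\ref{P} is indeed not needed. Your closing caveat about the Tate twist is also accurate and worth keeping: pushing the two-slot symbol through $\mu_s^{\otimes 2}\to\mu_{sk}^{\otimes 2}$ scales the class by $k^2$, while the two identifications $\mu_n^{\otimes 2}\isom\mu_n$ differ by a factor of $k$, so the naive coefficient chase only produces the desired identity multiplied by $k$ --- which, in a torsion group, is strictly weaker than the identity itself.
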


\begin{thm}\label{FMT}
Let $F$ be a $C_m$ field containing all $n$-th roots of unity. If $A\in \Br_n(F)$ is of exponent $n=p^t$, then: \ \\
$A=\otimes_{i=1}^t C_i$ where $C_i=\otimes_{j=1}^{p^{m-1}-1} (\alpha_{i,j},\beta_{i,j})_{p^{i}}$. In particular $$\len(p^t,F)\leq t(p^{m-1}-1).$$
\end{thm}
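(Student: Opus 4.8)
The plan is to argue by induction on $t$, at each stage using Theorem~\ref{MTP} to strip off the ``top layer'' of exponent $p$ and the divisibility of symbols to transport the resulting degree-$p$ symbols up to degree $p^t$. For the base case $t=1$ the algebra $A$ has exponent $p$, so Theorem~\ref{MTP} applied with $n=p$ gives directly $A\sim \otimes_{j=1}^{p^{m-1}-1}(\alpha_{1,j},\beta_{1,j})_p$, which is exactly $C_1$.

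For the inductive step, assume the statement for exponent $p^{t-1}$ and let $\exp(A)=p^t$. First I would pass to $A^{p^{t-1}}$, which has exponent dividing $p$; by Theorem~\ref{MTP} with $n=p$ there exist $\alpha_{t,j},\beta_{t,j}\in\mul{F}$ with $A^{p^{t-1}}\sim \otimes_{j=1}^{p^{m-1}-1}(\alpha_{t,j},\beta_{t,j})_p$. I then set $C_t=\otimes_{j=1}^{p^{m-1}-1}(\alpha_{t,j},\beta_{t,j})_{p^t}$, a product of $p^{m-1}-1$ symbols of degree $p^t$. The divisibility of symbols (\cite[page 537]{RB}), applicable since $\rho_{p^t}\in F$, yields $(\alpha_{t,j},\beta_{t,j})_{p^t}^{p^{t-1}}\sim(\alpha_{t,j},\beta_{t,j})_p$ for each $j$, hence $C_t^{p^{t-1}}\sim A^{p^{t-1}}$. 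Consequently the Brauer class $B=A\otimes C_t^{-1}$ satisfies $B^{p^{t-1}}\sim A^{p^{t-1}}\otimes (C_t^{p^{t-1}})^{-1}\sim 1$, so $\exp(B)$ divides $p^{t-1}$. Applying the induction hypothesis to $B$ gives $B\sim\otimes_{i=1}^{t-1}C_i$ with each $C_i$ of the prescribed shape, and therefore $A\sim C_t\otimes B\sim \otimes_{i=1}^{t}C_i$, as required.

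To read off the symbol-length bound in terms of symbols of degree $p^t$, I would convert each factor of $C_i$: since $p^i\le p^t$, divisibility gives $(\alpha_{i,j},\beta_{i,j})_{p^i}\sim(\alpha_{i,j},\beta_{i,j})_{p^t}^{p^{t-i}}\sim(\alpha_{i,j}^{p^{t-i}},\beta_{i,j})_{p^t}$, a single symbol of degree $p^t$ by bimultiplicativity of the symbol in its first slot (a standard relation, \cite{RB,GS}). Thus each $C_i$ is similar to a product of $p^{m-1}-1$ symbols of degree $p^t$, so $A$ is similar to $t(p^{m-1}-1)$ such symbols and $\len(p^t,F)\le t(p^{m-1}-1)$.

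The main point to get right is the exponent bookkeeping: the whole induction hinges on verifying $C_t^{p^{t-1}}\sim A^{p^{t-1}}$, which is what guarantees that tensoring with $C_t^{-1}$ genuinely lowers the exponent from $p^t$ to at most $p^{t-1}$ and thereby lets the inductive hypothesis engage. Everything else is a routine application of Theorem~\ref{MTP}, the divisibility relation, and the group structure of $\Br(F)$; note in particular that one never needs $C_t^{-1}$ to be rewritten in the prescribed form, since the induction is applied to $B$ while $C_t$ is carried along unchanged.
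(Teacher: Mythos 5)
Your proof is correct, and it uses exactly the same two ingredients as the paper --- Theorem~\ref{MTP} for the exponent-$p$ case and the divisibility of symbols --- but it runs the induction in the mirrored direction. The paper sets $B=A^p$ (which has exponent exactly $p^{t-1}$), applies the inductive hypothesis to $B$, lifts the resulting degree-$p^i$ symbols to degree $p^{i+1}$ via divisibility to obtain $B'$ with $B'^p\sim B$, and then handles the correction term $C_1=A\otimes B'^{-1}$, whose exponent divides $p$, by Theorem~\ref{MTP}; so the freshly produced symbols appear at the bottom degree $p$. You instead apply Theorem~\ref{MTP} at the top, to $A^{p^{t-1}}$ (exponent exactly $p$), lift those symbols to degree $p^t$ to form $C_t$ with $C_t^{p^{t-1}}\sim A^{p^{t-1}}$, and apply the induction to the correction term $B=A\otimes C_t^{-1}$; so your fresh symbols appear at the top degree $p^t$. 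The trade-off is where the ``exact exponent versus exponent dividing'' slack sits: the paper's induction is applied to an algebra of exact exponent $p^{t-1}$ while Theorem~\ref{MTP} is applied to $C_1$ whose exponent only divides $p$; in your version Theorem~\ref{MTP} is applied to an algebra of exact exponent $p$, while the induction is applied to $B$ whose exponent only divides $p^{t-1}$, so strictly you need the inductive statement in the form ``exponent dividing $p^{t-1}$'' (or pad with split symbols such as $(1,1)_{p^i}$) --- a harmless adjustment, symmetric to the one the paper itself glosses over when it declares $C_1$ to be ``of exponent $p$.'' Finally, your closing step, converting each $(\alpha_{i,j},\beta_{i,j})_{p^i}$ into the single degree-$p^t$ symbol $(\alpha_{i,j}^{p^{t-i}},\beta_{i,j})_{p^t}$, makes explicit a point the paper leaves implicit in deducing $\len(p^t,F)\leq t(p^{m-1}-1)$, since the definition of $\len(p^t,F)$ counts symbols of degree $p^t$; that is a worthwhile addition.
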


\begin{proof}
For $t=1$ this is theorem \ref{MTP} with $n=p$.

For $t=s+1$: Let $A$ be as above.
Define $B=A^p$ and notice that $B$ is of exponent $p^s$. Thus by induction $B\sim \otimes_{i=1}^{s} C'_i$ where\\ $C'_i=\otimes_{j=1} ^{p^{m-1}-1} (a_{i,j},b_{i,j})_{p^{i}}$.
For $i=2,...,s+1$ define $$C_i=\otimes_{j=1}^{p^{m-1}-1} (\alpha_{i,j},\beta_{i,j})_{p^{i}}$$ where $ \alpha_{i,j}=a_{i-1,j}; \beta_{i,j}=b_{i-1,j}$
and define $B'=\otimes_{i=2}^{s+1} C_i$. \\ Notice $B'^p=B$. Thus considering $C_1=A\otimes B'^{-1}$, we get \\ $C_1^p=A^p\otimes B'^{-p}\sim B\otimes B^{-1}\sim 1$. Thus $C_1$ is of exponent $p$, and by \ref{MT1} $C_1\sim \otimes_{j=1}^{p^{m-1}-1} (\alpha_{1,j},\beta_{1,j})_{p},$ implying 
$$A\sim C_1\otimes B'=\otimes_{i=1} ^{s+1} C_i$$ where $C_1,... ,C_{s+1}$ are as in the theorem.
\end{proof}


\section{Fields containing a $C_m$ field}

In this section we consider the more general case where the base field~$F$ contains a $C_m$ field.
Notice that this class of fields includes fields such as $\mathbb{C}(x_i,y_i,i\in \mathbb{Z})$, where there in no hope of finding bounds which are a function of the exponent alone as was explained in the background section.

Thus consider a central simple algebra $A$, of exponent $p^t$ and degree $p^s$ over a field $F$ containing a $C_m$ field $L$.

The main idea is that even though $F$ might not be finitely generated over $L$, $A$ is defined over a ``smaller" field $K$, which is finitely generated over $L$. This is expressed by considering the essential dimension of $A$ over $L$, denoted $\ed_L(A)$.

\begin{defn}
Let $A$ be as above. The essential dimension $\ed_L(A)$ of $A$ over $L$ is, $$\min \{ \trdeg_F(K) | L \subseteq K \subseteq F \mbox{ \ and \ } A=A'\otimes_K F \mbox{ \ for \ } A'\in \Br(K)\}$$
\end{defn}

It is known that $\ed_L(A)$ is finite and bounded by functions of the degree of $A$. For known bounds and more on the subject we refer the reader to \cite{LRRS} and \cite{M}.
Consider $A$ as above. Then there exists a field $L \subseteq K\subseteq F$ with $\trdeg_F(K)=\ed_L(A)$ and an $K$-csa $A'$ such that $A'\otimes_K F=A$. Thus if we write $A'$ as a sum of symbols (which we can since $K$ is a $C_{m+\ed_L(A)}$ field) we can tensor up to $F$ and write $A$ as a product of symbols.

The only problem is that the exponent of $A'$ can be bigger than that of $A$, and the resulting presentation of $A$ will use symbols of higher degrees than needed.

In order to deal with that we need to consider a specialized essential dimension, that is 
\begin{displaymath}\ed_{exp}(A)= \min \{ \trdeg_L(K) | L \subseteq K \subseteq F ; A=A'\otimes_K F; A'\in \Br(K); \exp(A')=\exp(A)\}.
\end{displaymath}

\begin{prop}\label{ed}
$\ed_{exp}(A)\leq \ed_F(A)+\dim(\SB(D))$ where $D$ is the underlying division algebra of $A'^{\otimes \exp(A)}$, $A'$ is as in the definition of the essential dimension and $\SB(D)$ is the Severi-Brauer variety of $D$.
\end{prop}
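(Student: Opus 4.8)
The plan is to build, inside $F$, a subfield $K''$ over which $A$ descends to an algebra of the \emph{correct} exponent, controlling $\trdeg_L(K'')$ by means of the Severi-Brauer variety of $D$. Let $K$ be a field realizing the essential dimension: $L \subseteq K \subseteq F$, $\trdeg_L(K) = \ed_L(A)$, and $A = A' \otimes_K F$ with $A' \in \Br(K)$. The only defect of $K$ is that $\exp(A')$ may exceed $\exp(A) = p^t$; I will repair this by enlarging $K$ just enough to kill the obstruction $A'^{\otimes p^t}$.

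First I would note that $A'^{\otimes p^t}$ splits over $F$, since $A'^{\otimes p^t} \otimes_K F = A^{\otimes \exp(A)} \sim 1$. Writing $D$ for the underlying division algebra of $A'^{\otimes p^t}$, this says exactly that $F$ is a splitting field for $D$. By the characterization of splitting fields via rational points of the Severi-Brauer variety, $\SB(D)$ has an $F$-rational point $P$.

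The crux is to descend $P$ to a subfield of bounded transcendence degree. The point $P$ factors through a (scheme-theoretic) point $\xi$ of $\SB(D)$ whose residue field $\kappa(\xi)$ embeds into $F$ over $K$; set $K'' = \kappa(\xi) \subseteq F$. Since $\xi$ lies on the $K$-variety $\SB(D)$ of dimension $\dim\SB(D)$, we get $\trdeg_K(K'') = \dim\overline{\{\xi\}} \leq \dim\SB(D)$, and $\xi$ now furnishes a $K''$-rational point of $\SB(D)$. Hence $D \otimes_K K''$ splits, i.e. $A'^{\otimes p^t} \otimes_K K'' \sim 1$.

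Finally, setting $A'' = A' \otimes_K K''$, one has $A'' \otimes_{K''} F = A$, so $\exp(A) \mid \exp(A'')$ because extension of scalars is a homomorphism $\Br(K'') \to \Br(F)$; conversely $A''^{\otimes p^t} = A'^{\otimes p^t} \otimes_K K'' \sim 1$ gives $\exp(A'') \mid \exp(A)$, whence $\exp(A'') = \exp(A)$. Thus $K''$ is admissible for $\ed_{exp}(A)$ and
$$\ed_{exp}(A) \leq \trdeg_L(K'') = \trdeg_L(K) + \trdeg_K(K'') \leq \ed_L(A) + \dim\SB(D).$$
I expect the sole real obstacle to be the descent step: justifying that an $F$-point of the projective variety $\SB(D)$ is defined over a subfield of transcendence degree at most $\dim\SB(D)$ over $K$, and that this subfield genuinely carries a rational point and hence splits $D$. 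The remaining exponent bookkeeping and the additivity of transcendence degree in the tower $L \subseteq K \subseteq K''$ are routine.
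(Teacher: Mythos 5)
Your proof is correct and takes essentially the same route as the paper: both descend the $F$-rational point of $\SB(D)$ to a subfield of $F$ of transcendence degree at most $\dim \SB(D)$ over $K$, observe that this subfield splits $D$, and finish with the same exponent bookkeeping in the tower $L \subseteq K \subseteq K''$. The only difference is in formalizing the descent step --- the paper invokes the generic splitting property to get a place $\nu\colon K(\SB(D)) \to F$ and takes $M$ to be its image, while you take the residue field $\kappa(\xi)$ of the scheme-theoretic image of the rational point; your version spells out why the descended field splits $D$ and has the right exponent, details the paper compresses into ``clearly $M$ satisfies all our requirements.''
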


\begin{proof}
It is enough to find a field $K\subseteq M \subseteq F$ such that $\exp(A'_M)~=~\exp(A)$ and $\trdeg_L(M)\leq \ed_L(A)+\dim(\SB(D))$.
Notice that by the definition of $A'$, $F$ is a field satisfying $\exp(A'_F)=\exp(A)$. In particular $D_F \sim {A'_F}^{\otimes \exp(A)} \sim F$, so there is a rational point on $\SB(D)_F$. In other words there is a specialization of the function field of $\SB(D)$,
$\nu:K(\SB(D))\rightarrow F$. Let $K\subseteq M\subseteq F$ be the image of $\nu$. Clearly $M$ satisfies all our requirements and $\trdeg_L(M)\leq  \ed_L(A)+\dim(\SB(D))$.

\end{proof}

\begin{thm}\label{FT}
Let $A$ be as above. Then the symbol length of $A$ is bounded by $\len(p^t,M)$ where $M$ is a $C_{m+\ed_L(A)+p^{s-t}-1}$ field.
\end{thm}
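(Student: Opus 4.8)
The plan is to assemble the machinery already developed in the previous sections into a single specialization argument. The key point is that, although $F$ itself may be wildly infinite over $L$ (e.g. $\mathbb{C}(x_i,y_i,i\in\mathbb{Z})$), the algebra $A$ of exponent $p^t$ and degree $p^s$ is defined over a subfield of controlled transcendence degree, and we can moreover insist that this subfield realizes the exponent $p^t$ without inflating it. First I would invoke Proposition~\ref{ed}: it produces a field $L\subseteq M\subseteq F$ with $\exp(A'_M)=\exp(A)=p^t$ and
\begin{displaymath}
\trdeg_L(M)\leq \ed_L(A)+\dim(\SB(D)),
\end{displaymath}
where $D$ is the underlying division algebra of $A'^{\otimes p^t}$ and $A'$ is a descent of $A$ to a field of transcendence degree $\ed_L(A)$ over $L$.

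The next step is to control $\dim(\SB(D))$. Since $A$ has degree $p^s$ and exponent $p^t$, the algebra $A'^{\otimes p^t}$ has degree dividing $p^s$ (it is a power of $A'$, itself of degree $p^s$), and the bound we are aiming for, $p^{s-t}-1$, is exactly the dimension $\ind(D)-1$ one expects when $D$ has index $p^{s-t}$. Concretely, I would argue that the underlying division algebra $D$ of $A'^{\otimes p^t}$ has index dividing $p^{s-t}$: writing $A'=\M_r(D_0)$ with $D_0$ division of degree $p^s/r$, the index of any tensor power is bounded by the degree, and the exponent condition $\exp(A')=p^t$ forces $A'^{\otimes p^t}$ to have index at most $p^{s-t}$ by the standard primary-decomposition/index estimates. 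Hence $\dim(\SB(D))=\ind(D)-1\leq p^{s-t}-1$, giving
\begin{displaymath}
\trdeg_L(M)\leq \ed_L(A)+p^{s-t}-1.
\end{displaymath}

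Now I would feed this into the $C_m$ transfer principle. Since $L$ is $C_m$ and $M$ has transcendence degree at most $\ed_L(A)+p^{s-t}-1$ over $L$, the Lang--Nagata theorem (item (3) of the $C_m$ examples) shows that $M$ is a $C_{m+\ed_L(A)+p^{s-t}-1}$ field. Because $M$ contains all $p^t$-th roots of unity (they lie in $L\subseteq M$) and $A'_M\in\Br(M)$ has exponent $p^t$, I can apply the symbol-length bound for $C_{m'}$ fields to $A'_M$, expressing it as a tensor product of at most $\len(p^t,M)$ symbol algebras of degree $p^t$ over $M$. Finally, tensoring the whole presentation up along $M\hookrightarrow F$ turns each symbol $(\alpha,\beta)_{p^t,M}$ into the symbol $(\alpha,\beta)_{p^t,F}$ and yields a presentation of $A=A'_M\otimes_M F$ as a product of the same number of symbols over $F$; the length cannot increase under base change, so the symbol length of $A$ is at most $\len(p^t,M)$, which is the claim.

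The main obstacle I expect is the index bound $\ind(D)\leq p^{s-t}$ in the second step: one must be careful that $A'$ genuinely has degree $p^s$ (or at least that its underlying division algebra has index dividing $p^s$) after the descent, and that raising to the $p^t$-th power cuts the index down by the factor $p^t$ rather than merely bounding it crudely. This is where the interplay between the chosen $\ed_{\exp}$ (which pins the exponent at $p^t$) and the degree constraint is essential — without controlling the exponent of the descended algebra, $\dim(\SB(D))$ could be as large as $p^s-1$ and the exponent $p^{s-t}-1$ refinement would be lost. Everything else is a direct citation of Proposition~\ref{ed}, the Lang--Nagata $C_m$ ascent, and Theorem~\ref{FMT}/\ref{MTP}.
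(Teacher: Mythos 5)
Your overall route is the same as the paper's: invoke Proposition~\ref{ed} to obtain $M$ with $\exp(A'_M)=p^t$ and $\trdeg_L(M)\leq \ed_L(A)+\dim(\SB(D))$, bound $\ind(D)$ by $p^{s-t}$, conclude via Lang--Nagata that $M$ is a $C_{m+\ed_L(A)+p^{s-t}-1}$ field, apply Theorem~\ref{FMT} to $A'_M$ over $M$, and tensor the resulting presentation up to $F$. The first, third and fourth steps are correct and match the paper exactly.

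The gap is in your justification of the key inequality $\ind(D)\leq p^{s-t}$. You derive it from ``the exponent condition $\exp(A')=p^t$,'' but $A'$ is the descent realizing $\ed_L(A)$, and its exponent is precisely what cannot be controlled at that stage: the paper stresses that $\exp(A')$ may be strictly larger than $\exp(A)=p^t$, and this is the entire reason Proposition~\ref{ed} and the variety $\SB(D)$ are introduced at all. Worse, if $\exp(A')=p^t$ did hold, then $A'^{\otimes p^t}$ would already be split, $D$ would be trivial, $\dim(\SB(D))=0$, and the term $p^{s-t}-1$ in the statement would be superfluous --- so your hypothesis both begs the question (exponent control is achieved only \emph{after} passing to $M$, whose construction requires the bound on $\dim(\SB(D))$ first) and trivializes the very step it is meant to prove. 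The correct argument, which is the one the paper uses, needs no hypothesis on $\exp(A')$: by the classical index-reduction fact for tensor powers (for a class $B$ of $p$-power index, $\ind(B^{\otimes p^k})$ divides $\ind(B)/p^k$), one gets $\ind(D)=\ind(A'^{\otimes p^t})\leq \ind(A')/p^t$, and $\ind(A')\leq \deg(A')=\deg(A)=p^s$ since $A'\otimes_K F=A$ (here $K$ is the field of definition of $A'$; note also that what divides $p^s$ is the \emph{index} of $A'^{\otimes p^t}$, not its degree, which is enormous). Hence $\ind(D)\leq p^{s-t}$ unconditionally. You do single out this index-reduction fact in your final paragraph as ``the main obstacle,'' which is the right instinct, but the resolution you propose for it (pinning the exponent via $\ed_{exp}$) is circular, so as written this step fails.
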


\begin{proof}
Let $M$ be as in the proof of proposition \ref{ed}. We want to bound the transcendance degree of $M$.
Notice that $\dim(\SB(D))=\ind(D)-1$ thus we want to bound $\ind(D)$. Since $D\sim A'^{\exp(A)}$ we have $\ind(D)~\leq~\frac{\ind(A')}{\exp(A')}$. Also, $\ind(A')\leq \deg(A)$ as $A'\otimes F=A$ and by assumption $\exp(A')~=~\exp(A)$. Thus, $\ind(D)\leq  \frac{\ind(A')}{\exp(A')} \leq \frac{\deg(A)}{\exp(A)}=p^{s-t}$. It follows that $M$ is a $C_{m+\ed_L(A)+p^{s-t}-1}$ field and applying theorem \ref{FMT} we see that $\len(p^s,p^t,A)~\leq~\len(p^t,M)$ where $M$ is as in the theorem.
\end{proof}

\begin{rem}
If we replace $A$ by its underlying division algebra, $D_A$, in the above we get the bound $\len(m+\ed_L(D_A)+\frac{\ind(A)}{\exp(A)}-1,p^t) $, which might seem smaller as $\ind(A)\leq \deg(A)$. However it might also happen that $\ed_L(D_A)>\ed_L(A)$. For example, if $D$ is a generic division algebra of index $4$ over a field containing $\rho_4$ we know by \cite{M} that $\ed(D)=5$ where as by \cite{LRRS} $\ed(\M_2(D))\leq 4$ since $D$ is similar to the tensor product of two symbols of degree $4$ and $2$ respectively.

\end{rem}
%

\section{The Brauer dimension of a $C_m$ field}
For $F$ a $C_m$ field and $A\in \Br(F)$ of exponent $p^n$, we will show that $\ind(A)\leq \exp(A)^{p^{m-1}-1}$, that is $\Brdim_p(F)\leq p^{m-1}-1$.
We first reduce to the exponent $p$ case and deduce the theorem from our previous results on symbol length.

\begin{prop}\label{IND}
Suppose $F$ and all its algebraic extensions, $L$, have the property that for all central
simple $A/L$ of exponent $p$ satisfies, $\ind(A)\leq p^s$. Then, any $A/F$ of exponent $p^n$ satisfies,
$\ind(A)\leq p^{ns}$.
\end{prop}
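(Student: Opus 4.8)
The plan is to induct on $n$, peeling off one power of $p$ from the exponent at a time and using the exponent-$p$ hypothesis as the engine at the bottom. For the base case $n=1$ the hypothesis applied to $L=F$ gives $\ind(A)\leq p^s=p^{1\cdot s}$ directly, so there is nothing to do. Assume then that the bound holds for all algebras of exponent $p^{n-1}$ over $F$, and let $A/F$ have exponent $p^n$.

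The key move is to consider $B=A^{\otimes p}$, which has exponent $p^{n-1}$, so by the induction hypothesis $\ind(B)\leq p^{(n-1)s}$. I would then choose a splitting field $L$ of $B$ with $[L:F]=\ind(B)$ — for instance a maximal subfield of the underlying division algebra of $B$ — so that $[L:F]\leq p^{(n-1)s}$. Over $L$ we have $(A_L)^{\otimes p}=B_L$ split, hence $A_L$ has exponent dividing $p$. Since $L$ is a finite algebraic extension of $F$, the standing hypothesis applies to it and yields $\ind(A_L)\leq p^s$ (the case where $A_L$ splits only makes the index smaller).

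To finish I would invoke the standard index-restriction inequality $\ind(A)\mid [L:F]\cdot\ind(A_L)$: a degree-$\ind(A_L)$ splitting field of $A_L$, composed with $L/F$, produces a splitting field of $A$ of degree $[L:F]\cdot\ind(A_L)$, and the index divides the degree of any splitting field. Combining the three bounds gives $\ind(A)\leq [L:F]\cdot\ind(A_L)\leq p^{(n-1)s}\cdot p^s=p^{ns}$, which closes the induction. The only step that is not pure bookkeeping of exponents under $A\mapsto A^{\otimes p}$ and under base change to $L$ is this divisibility $\ind(A)\mid [L:F]\,\ind(A_L)$; I expect it to be the main ingredient, though it is a well-known fact rather than a genuine obstacle, and the care it requires is simply in verifying that $L$ really is a finite algebraic extension so that the hypothesis is legitimately available.
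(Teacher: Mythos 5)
Your proof is correct and follows essentially the same route as the paper: induct on $n$, pass to $B=A^{\otimes p}$, split $B$ by a field $L$ of degree $\ind(B)$, bound $\ind(A_L)\leq p^s$ via the hypothesis on the algebraic extension $L$, and compose splitting fields to bound $\ind(A)$ by $[L:F]\ind(A_L)$. The only cosmetic difference is that you package the last step as the divisibility $\ind(A)\mid [L:F]\,\ind(A_L)$, whereas the paper explicitly builds the tower $F\subset L\subset K$ and uses that $A_K$ is split with $[K:F]=\ind(B)\ind(A_L)$ --- the same fact in a different wrapper.
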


\begin{proof}
Clearly the case $n=1$ holds by assumption.
Let $A$ be of exponent $p^{n+1}$.
Consider $B=A^p$. Then $B$ is of exponent $p^n$ and we have $\ind(B)\leq \exp(B)^s$.
Let $L$ be a splitting field for $B$ with $[L:F]~=~\ind(B)$.
Also consider $A_L\in Br(L)$. We have $(A_L)^p=A^p_L=B_L=1$, thus $A_L$ is of exponent $p$.
Now by our assamption on $F$ and its algebraic extensions we have that $\ind(A_L)\leq p^s$.
Take a splitting field $L\subset K$ of $A_L$ with $[K:L]=\ind(A_L)$ and consider $K$ as an extension of $F$. Then 
$$[K:F]=[K:L][L:F]=\ind(B)\ind(A_L)\leq p^{ns}p^s=p^{(n+1)s} \ \hbox{and} \ A_K=1.$$ Thus $\ind(A)\leq  p^{(n+1)s}$ as needed.
\end{proof}

\begin{prop}\label{ind1}
Let $F$ be a $C_m$ field and $L$ be any algebraic extension of $F$. For every $A/L$ of exponent~$p$ we have $ind(A)\leq p^s$.
\end{prop}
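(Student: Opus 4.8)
The plan is to reduce to the case where the base field contains a primitive $p$-th root of unity and then read off the index bound from the symbol-length theorem of Section~4. Throughout I take $s = p^{m-1}-1$, matching the exponent of $p$ appearing in the section's statement. First I would dispose of the case $\mathrm{char}(L)=p$, where $\rho_p$ does not exist and one must instead use the characteristic-$p$ symbols of Section~7; so assume $\mathrm{char}(L)\neq p$. Let $\rho_p$ be a primitive $p$-th root of unity and set $L'=L(\rho_p)$. Since $[L':L]$ divides $p-1$, it is coprime to $\ind(A)$, which is a power of $p$.

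The key first step is to show $\ind(A)=\ind(A_{L'})$. Here I would invoke the standard restriction--corestriction estimate $\ind(A)\mid [L':L]\,\ind(A_{L'})$ together with the inequality $\ind(A_{L'})\mid \ind(A)$. Writing $\ind(A)=p^a$ and $\ind(A_{L'})=p^b$, the first relation forces $p^a\mid [L':L]\,p^b$, and coprimality of $[L':L]$ to $p$ gives $a\le b$; combined with $b\le a$ this yields $\ind(A)=\ind(A_{L'})$. Thus it suffices to bound $\ind(A_{L'})$.

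Next I would note that $L'$ is again a $C_m$ field: it is algebraic over $L$, hence algebraic over $F$, and an algebraic extension of a $C_m$ field is $C_m$ since the transcendence degree does not increase. Moreover $A_{L'}$ has exponent dividing $p$, so either it splits (and the bound is trivial) or it has exponent exactly $p$. Since $L'$ now contains $\rho_p$, I can apply Theorem~\ref{MTP} (equivalently Theorem~\ref{FMT} with $t=1$) to obtain $\len(p,A_{L'})\le p^{m-1}-1$. Hence $A_{L'}$ is similar to a tensor product of at most $p^{m-1}-1$ symbol algebras of degree $p$; such a product has degree $p^{\,p^{m-1}-1}$, so its index divides $p^{\,p^{m-1}-1}$. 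Therefore $\ind(A)=\ind(A_{L'})\le p^{\,p^{m-1}-1}=p^{s}$, as desired.

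The main obstacle is entirely the bookkeeping around roots of unity: the hypothesis imposes no root-of-unity assumption on $F$, so the symbol-length theorems of Sections~3--4 cannot be applied to $L$ directly, and the whole reduction rests on the prime-to-$p$ extension $L(\rho_p)$ leaving the $p$-power index unchanged. Once that is secured, the passage from symbol length to index is the elementary observation that the index of a product of degree-$p$ symbols divides the corresponding power of $p$. The only genuinely separate case is $\mathrm{char}(L)=p$, which is handled by the characteristic-$p$ techniques of Section~7.
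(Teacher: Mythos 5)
Your proposal is correct and, at its core, takes the same route as the paper: an algebraic extension of a $C_m$ field is again $C_m$ (transcendence degree zero, by Lang--Nagata), and then Theorem \ref{MTP} bounds the symbol length of an exponent-$p$ class by $p^{m-1}-1$, hence its index by $p^{p^{m-1}-1}=p^s$. The difference is one of completeness. The paper's proof is exactly two sentences --- ``$L$ is algebraic over $F$, hence $C_m$; now apply Theorem \ref{MTP}'' --- and silently ignores that Theorem \ref{MTP} requires the base field to contain the $p$-th roots of unity, a hypothesis appearing neither in the proposition nor anywhere in Section 6. Your passage to $L'=L(\rho_p)$, using that $[L':L]$ divides $p-1$ and is therefore prime to $p$, together with the restriction--corestriction divisibilities $\ind(A_{L'})\mid\ind(A)$ and $\ind(A)\mid [L':L]\,\ind(A_{L'})$ to conclude $\ind(A)=\ind(A_{L'})$, is precisely the missing bookkeeping, and you carry it out correctly; note also that $L'$ is still algebraic over $F$, so your appeal to the $C_m$ property of $L'$ is legitimate. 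One caveat: your dispatch of the case $\ch(L)=p$ to Section 7 does not actually recover the stated bound, since Section 7 only yields symbol length at most $m$ for exponent-$p$ classes, i.e.\ $\ind(A)\leq p^m$, and $p^m$ can exceed $p^{p^{m-1}-1}$ (e.g.\ $p=m=2$ gives $4>2$); but the paper shares this defect --- it treats characteristic $p$ entirely separately and never reconciles the two bounds --- so this is a flaw inherited from the source rather than introduced by your argument.
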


\begin{proof}
Since $L$ is algebraic over $F$ and $F$ is $C_m$ so is $L$. Now the proposition follows from Theorem \ref{MTP}.
\end{proof}

Combining \ref{IND} and \ref{ind1} we get:

\begin{thm}\label{Bd}
 If $F$ is a $C_m$ field, then $\Brdim_p(F)\leq p^{m-1}-1$.
\end{thm}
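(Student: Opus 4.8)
The plan is to prove Theorem~\ref{Bd} by assembling the two propositions that immediately precede it, since all the substantive work has already been carried out. The goal is the bound $\Brdim_p(F) \leq p^{m-1}-1$ for a $C_m$ field $F$, which by definition means: for every $n$ and every $A \in \Br_{p^n}(F)$, the index $\ind(A)$ divides $\exp(A)^{p^{m-1}-1}$. First I would set $s = p^{m-1}-1$, which is exactly the symbol-length bound for exponent-$p$ algebras coming from Theorem~\ref{MTP}.

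The key steps, in order, are as follows. First, I would verify the exponent-$p$ index bound uniformly across all algebraic extensions of $F$: by Proposition~\ref{ind1}, since any algebraic extension $L$ of a $C_m$ field is again $C_m$, every $A/L$ of exponent $p$ satisfies $\ind(A) \leq p^s$ (this uses that a single exponent-$p$ symbol of degree $p$ has index at most $p$, so a product of $s$ symbols has index at most $p^s$). Second, I would feed this uniform hypothesis into Proposition~\ref{IND}, whose statement is precisely tailored to take ``$\ind(A) \leq p^s$ for all exponent-$p$ algebras over $F$ and its algebraic extensions'' and upgrade it to ``$\ind(A) \leq p^{ns}$ for every exponent-$p^n$ algebra over $F$.'' Third, I would read off the conclusion: for $A \in \Br(F)$ of exponent $p^n = \exp(A)$, we get $\ind(A) \leq (p^n)^s = \exp(A)^s = \exp(A)^{p^{m-1}-1}$, and since $\ind(A)$ is a power of $p$ dividing $\exp(A)^{p^{m-1}-1}$, this gives $\Brdim_p(F) \leq p^{m-1}-1$ by the definition of the Brauer dimension at $p$.

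There is essentially no obstacle remaining, because the two inductive lemmas do all the lifting. The only point requiring a moment of care is checking that the hypothesis of Proposition~\ref{IND} is genuinely met: that proposition demands the exponent-$p$ index bound not just for $F$ but for \emph{all} its algebraic extensions $L$, and this is exactly why Proposition~\ref{ind1} is phrased for an arbitrary algebraic extension $L/F$ rather than for $F$ alone. The crucial observation making this work is that the $C_m$ property descends to algebraic extensions, so the same constant $s = p^{m-1}-1$ serves uniformly over every $L$; without this uniformity the induction in Proposition~\ref{IND}, which passes to the splitting fields $L \subset K$ of $B = A^p$ and $A_L$, would break down. Once uniformity is in hand, the induction on $n$ in Proposition~\ref{IND} proceeds by splitting off the exponent via the tower $K/L/F$ with multiplicative degrees, and the theorem follows by direct substitution.
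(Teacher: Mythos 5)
Your proposal is correct and matches the paper's own proof: the paper derives Theorem~\ref{Bd} exactly by combining Proposition~\ref{ind1} (the $C_m$ property passes to algebraic extensions, so Theorem~\ref{MTP} gives the uniform exponent-$p$ bound $\ind(A)\leq p^{p^{m-1}-1}$ over every such extension) with Proposition~\ref{IND} (which lifts that uniform bound to exponent $p^n$). Your added remarks on why the uniformity over all algebraic extensions is essential for the induction in Proposition~\ref{IND} are accurate and reflect precisely the role the paper assigns to each proposition.
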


\section{The case of $p$-Algebras}
In this section we deal with the case where $F$ has characteristic $p$ and $A\in \Br_{p^e}(F)$.
It turns out that things are much simpler in this case, both for general fields as shown by Florence in \cite{MF} and for $C_m$ fields where things basically follow from an exercise in Serre, \cite{Serre}.
\subsection{General fields of characteristic $p$} \ \\
In \cite{MF} Florence proves the following theorem:
\begin{thm}\label{fl} 
If $F$ is a field with $\ch(F)=p$ and $A$ is an $F$-csa of index $p^n$ and exponent $p^e$, then $A$ is similar to the product of at most $p^n-1$ cyclic algebras.
\end{thm}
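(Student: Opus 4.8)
The plan is to argue directly, by induction on the index exponent $n$, rather than simply invoke \cite{MF}. Since the Brauer class of $A$ is represented by its underlying division algebra $D$ and $A\sim D$, I may assume $A=D$ is division of degree $p^n$ and exponent $p^e$. The first point to stress is that the $n$-Kummer space technique of Sections~3 and~4 is unavailable here: in characteristic $p$ there is no primitive $p$-th root of unity, and the relevant cyclic $p$-algebras are not symbol algebras $(a,b)_{p^e}$ but Artin--Schreier--Witt cyclic algebras $(L/F,\sigma,b)$ attached to a cyclic extension $L/F$ of degree $p^e$. So the argument must be structural, resting on Albert's theory of $p$-algebras in place of the norm-form method.

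The base case $n=1$ is Albert's theorem that every $p$-algebra of degree $p$ is cyclic, so $D$ is a single cyclic algebra of degree $p=p^e$ and $1\le p-1$. For the inductive step I would first produce a degree-$p$ extension that drops the index by a full factor of $p$. By Albert's structure theory $D$ contains a maximal separable subfield $E$ with $[E:F]=p^n$; choosing a degree-$p$ subfield $K\subseteq E$ (taken Galois, i.e. cyclic, which can be arranged) one has $[E:K]=p^{n-1}$, and since $E$ splits $D_K$ this gives $\ind(D_K)\mid p^{n-1}$, while $\ind(D)\mid \ind(D_K)\cdot[K:F]$ forces $\ind(D_K)\ge p^{n-1}$. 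Hence $\ind(D_K)=p^{n-1}$ and $\exp(D_K)\mid p^e$, so by the inductive hypothesis $D_K$ is similar over $K$ to a tensor product of at most $p^{n-1}-1$ cyclic algebras of degree $p^e$.

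The decisive step is to convert this presentation over $K$ into one over the ground field $F$. Here I would use that the relative Brauer group $\Br(K/F)$ of the cyclic degree-$p$ extension is cyclic, isomorphic to $\mul F/\N_{K/F}(\mul K)$ and generated by a single Artin--Schreier cyclic symbol, together with the identity that $\cores[K/F]\circ\res[K/F]$ is multiplication by $p$ and the characteristic-$p$ analogues of the norm relations of Proposition~\ref{A} (and of the divisibility of symbols, which lets a degree-$p$ cyclic be rewritten as the $p^{e-1}$-th power of a degree-$p^e$ cyclic). Combining a descent of the $K$-presentation with the correction term, which lives in $\Br(K/F)$, each of the at most $p^{n-1}-1$ cyclic factors over $K$ is traded for at most $p$ cyclics of degree $p^e$ over $F$, and the $\Br(K/F)$ correction contributes at most $p-1$ more; this telescopes to $p(p^{n-1}-1)+(p-1)=p^n-1$ factors over $F$, the asserted bound.

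The main obstacle is precisely this descent. An individual cyclic algebra over $K$ need not descend to $F$, so one cannot lift the factors $C_i$ one by one; the conversion must be carried out globally, using the explicit realization of $D$ as an (abelian) crossed product over $E$ and the behaviour of corestriction on cyclic classes, and it is exactly this step that produces the multiplicative growth from the naive count $n$ to the true value $p^n-1$. The non-cyclicity of general $p$-algebras of degree $p^n$ for $n\ge 3$ (so that $D$ is \emph{not} a single manipulable cyclic symbol whose slots one can rewrite, as in Theorem~\ref{MT1}) is what forces the structural route and makes the bookkeeping the heart of the proof; the tightness of $p^n-1$ would then be verified against the generic division algebra of index $p^n$ and exponent $p^e$.
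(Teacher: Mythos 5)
Your preliminary reductions are fine: passing to the underlying division algebra, the base case via Albert's cyclicity theorem for degree $p$, and the index-reduction step (a maximal separable subfield $E$ of $D$ contains a subfield $K$ of degree $p$ that is cyclic over $F$, since a proper subgroup of a $p$-group lies in a normal subgroup of index $p$, and then $\ind(D_K)=p^{n-1}$ by the sandwich $\ind(D)\leq [K:F]\ind(D_K)$ and $\ind(D_K)\mid [E:K]$). The genuine gap is the descent step, and as described it cannot work. The only mechanism you name for pushing the $K$-presentation down to $F$ is corestriction, but $\cores[K/F]\circ\res[K/F]$ is multiplication by $[K:F]=p$ on $\Br(F)$, so $\cores[K/F](D_K)\sim D^{\otimes p}$, not $D$. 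Hence any product of cyclics over $F$ manufactured from the factors $C_i$ by corestriction (Rosset--Tate style, giving your ``at most $p$ cyclics per factor'') represents the class of $D^{p}$, and the discrepancy is \emph{not} in $\Br(K/F)$: if $B\in\Br(F)$ satisfies $B_K\sim p\cdot D_K$, which is what $\res[K/F]\circ\cores[K/F]=\sum_{\sigma\in\Gal(K/F)}\sigma$ yields, then $(D\otimes B^{-1})_K\sim (1-p)\cdot D_K$, and since $1-p$ is prime to $p$ this class has the same (nontrivial, for $n\geq 2$) order as $D_K$. So the correction term lives only in $\Br_{p^e}(F)$, and bounding the cyclic length of arbitrary classes there is precisely the theorem being proved --- the argument is circular. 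The failure is starkest when $e=1$: then $\cores[K/F](D_K)\sim D^{\otimes p}\sim F$ is trivial, so the corestricted presentation carries no information whatsoever, while $D$ itself is certainly not in $\Br(K/F)$ because $\ind(D_K)=p^{n-1}>1$. Your arithmetic $p(p^{n-1}-1)+(p-1)=p^n-1$ is attached to a transfer mechanism that does not exist; the admission that the conversion ``must be carried out globally'' is exactly the missing proof.

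For comparison, the paper's proof avoids both induction and separable descent entirely, exploiting what is special to characteristic $p$: the purely inseparable field $F^{\frac{1}{p^e}}$ splits $A$, so there is a place $\nu\colon F(\SB(A))\rightarrow F^{\frac{1}{p^e}}$ from the function field of the Severi--Brauer variety, and its image $K$ is a finite, purely inseparable splitting field of $A$; since $\SB(A)\times F^{sep}\cong\mathbb{P}^{p^n-1}(F^{sep})$, this image is generated over $F$ by $p^n-1$ elements, each of degree at most $p^e$. Albert's theorem --- that an algebra split by $F[x_1,\dots,x_t\st x_i^{n_i}=\alpha_i\in\mul{F}]$ is similar to a product of $t$ cyclic $p$-algebras with the prescribed maximal subfields --- then finishes the proof. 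Note that Albert's theorem is itself a descent statement, but one that holds for purely inseparable splitting fields and has no analogue for your separable $K$; that is why the route through $F^{\frac{1}{p^e}}$ and $\SB(A)$, rather than through a separable subfield tower, is the one that succeeds.
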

Here is a short proof along the lines of \cite{MF}.
The idea is based on the following two well known theorems.
\begin{enumerate}  
\item Let $A$ be a $p$-algebra of exponent $p^n$, then $$F^{\frac{1}{p^n}}=F[x_f, f\in \mul{F}| x_f^{p^n}=f]$$ splits $A$ (\cite{RB} page 575, exercises 30,31).
\item \label{Albp} If $A\in \Br(F)$ is split by a purely inseparable extension of the form $K=F[x_1,...,x_t|x_i^{n_i}=\alpha_i\in\mul{F}],$ then $A$ is similar to the tensor product $A=\otimes_{i=1}^t A_i,$ where $A_i$ is a cyclic $p$-algebra with maximal subfield $K_i=F[x|x^{n_i}=\alpha_i]$, and in particular the symbol length of $A$ is at most $t$ (Albert, \cite{Al} theorem 28, page 108).
\end{enumerate}

The idea is then to start with the splitting field $F^{\frac{1}{p^e}}$ which (in general) is infinite dimensional over $F$, and to find a finite dimensional subfield splitting $A$. Then one uses Albert's theorem to present $A$ as the product of cyclic $p$-algebras.

\begin{proof} of Theorem \ref{fl}. \\
Let $\SB(A)$ denote the Severi-Brauer variety of $A$ and $F\SB(A))$ its function field. Since $F^{\frac{1}{p^e}}$ splits $A$ there is a place
$\nu: F(\SB(A))\rightarrow F^{\frac{1}{p^e}}$. Let $K\subset F^{\frac{1}{p^e}}$ be the image of $\nu$.
First notice that $[K:F]<\infty$ since $\SB(A)$ is finitely generated and $F^{\frac{1}{p^e}}$ is algebraic over $F$.
It remains to bound $[K:F]$. Since dimension is invariant under scalar extensions, it is enough to bound $[K\otimes F^{sep}: F^{sep}]$, but $A\otimes F^{sep}$ is split and thus $\SB(A)\times F^{sep}\cong \mathbb{P}^{(p^n-1)}(F^{sep})$, which implies 
$$F^{sep}[\SB(A)]\cong F^{sep}[x_1,...,x_{p^n-1}].$$ Thus the image of $\nu$ is generated by $p^n-1$ elements. Now the image of every element is algebraic of degree at most $p^e$, which implies \linebreak $[K\otimes F^{sep}:F\otimes F^{sep}]\leq p^{e(p^n-1)}$ and we are done by Albert's theorem above.
\end{proof}

\subsection{$C_m$ fields}
Now we assume $F$ is a $C_m$ field with $\ch(F)=p$.
As we just saw above we want to bound the dimension of the image of $\nu$.
But since $F$ is $C_m$, an exercise in Serre (\cite{Serre} page 89 exercise  3) tells us:
\begin{prop}
If $F$ is as above, then $[F^{\frac{1}{p^e}}:F]\leq p^{em}$.
\end{prop}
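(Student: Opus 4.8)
The plan is to reduce the statement to a bound on the single imperfection degree $[F:F^p]$ and then extract that bound from the $C_m$ property via an explicit diagonal form. First I would observe that the iterated Frobenius $x\mapsto x^{p^e}$ is a field isomorphism $F^{\frac{1}{p^e}}\cong F$ carrying the subfield $F$ onto $F^{p^e}$, so that $[F^{\frac{1}{p^e}}:F]=[F:F^{p^e}]$. Writing $[F:F^{p^e}]=\prod_{i=0}^{e-1}[F^{p^i}:F^{p^{i+1}}]$ and noting that $\phi^i\colon x\mapsto x^{p^i}$ is an isomorphism $F\cong F^{p^i}$ sending $F^p$ onto $F^{p^{i+1}}$, each factor equals $[F:F^p]$. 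Hence $[F^{\frac{1}{p^e}}:F]=[F:F^p]^e$, and it suffices to prove $[F:F^p]\leq p^m$; the claimed bound $p^{em}$ then follows by multiplicativity.

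To bound $[F:F^p]$ I would argue by contradiction using $p$-independence. Suppose $[F:F^p]>p^m$; then one can select $m+1$ elements $a_1,\dots,a_{m+1}\in F$ that are $p$-independent, meaning the $p^{m+1}$ monomials $a^I=a_1^{i_1}\cdots a_{m+1}^{i_{m+1}}$ with $0\leq i_j<p$ are linearly independent over $F^p$. Using these as coefficients, I would form the homogeneous degree-$p$ diagonal form in $p^{m+1}$ variables
$$f\left(\{x_I\}\right)=\sum_I a^I x_I^p,$$
the sum running over all such multi-indices $I$. A nontrivial zero with $x_I\in F$ would yield a nontrivial relation $\sum_I a^I x_I^p=0$, which, since each $x_I^p\in F^p$, is an $F^p$-linear dependence among the $a^I$, contradicting their $p$-independence; thus $f$ is anisotropic over $F$. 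But $f$ has degree $d=p$ in $p^{m+1}>p^m=d^m$ variables, so the $C_m$ hypothesis forces a nontrivial zero—a contradiction. Therefore no $m+1$ $p$-independent elements exist, giving $[F:F^p]\leq p^m$.

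The only point that genuinely requires care is verifying that $p$-independence is \emph{exactly} the condition rendering the diagonal form anisotropic over $F$, so that passing a nontrivial zero back through the $p$-th power map lands one inside $F^p$ and triggers the linear dependence; the rest is bookkeeping of field degrees through Frobenius. I would also remark that this argument simultaneously rules out an a priori infinite imperfection degree, since any surplus of $p$-independent elements already produces a form violating the $C_m$ condition.
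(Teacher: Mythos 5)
Your proof is correct, but it takes a genuinely different route from the paper's. The paper argues directly at the level of $F^{1/p^e}$: it takes a subfield $K\subseteq F^{1/p^e}$ of degree $p^{em}$ over $F$ with basis $k_1,\dots,k_{p^{em}}$, $k_i^{p^e}=\alpha_i$, and, given any $y\in F^{1/p^e}$ with $y^{p^e}=\beta$, applies the $C_m$ hypothesis to the single degree-$p^e$ diagonal equation $\sum_i \alpha_i x_i^{p^e}=\beta\, x_{p^{em}+1}^{p^e}$ in $p^{em}+1>(p^e)^m$ variables; after ruling out $x_{p^{em}+1}=0$ (the same injectivity-of-Frobenius observation you use), the nontrivial solution produces $t\in K$ with $t^{p^e}=\beta$, hence $y=t$ by uniqueness of $p^e$-th roots in characteristic $p$, so $K=F^{1/p^e}$. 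You instead reduce to exponent one via the Frobenius tower, $[F^{1/p^e}:F]=[F:F^p]^e$, and then bound the imperfection degree $[F:F^p]\le p^m$ by contradiction: $m+1$ $p$-independent elements would make the diagonal form $\sum_I a^I x_I^p$ anisotropic, yet it has degree $p$ in $p^{m+1}>p^m$ variables, violating $C_m$. What your version buys: you only ever need forms of degree $p$ rather than $p^e$, and the contradiction format transparently covers an a priori infinite imperfection degree. What it costs: you import the standard theory of $p$-independence/$p$-bases (to extract the $m+1$ elements from $[F:F^p]>p^m$) and the tower computation through Frobenius, whereas the paper's argument is self-contained, needing nothing beyond a linear basis of one subfield. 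The engine is identical in both proofs: isotropy over $F$ of a diagonal form whose coefficients are independent over $p$-th (respectively $p^e$-th) powers forces a forbidden linear dependence.
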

\begin{proof}
Assume we found a subfield $K\subseteq F^{\frac{1}{p^e}}$ of dimension $p^{em}$ over $F$. We now show $K= F^{\frac{1}{p^e}}$.
Pick a basis $\{ k_1,...k_{p^{em}}\}$ for $K$ over $F$ such that $k_i^{p^e}=\alpha_i$. Let $y\in F^{\frac{1}{p^e}}$ so that $y^{p^e}=\beta$. We will show $y\in K$.
Consider the homogeneous equation $$\sum_{i=1}^{p^{em}}x_i^{p^e}\alpha_i=x_{p^{em}+1}^{p^e}\beta.$$ Since this is of degree $p^e$ in $p^{em}+1>p^{em}$ variables, the $C_m$ property implies there is a non-trivial solution $\overline{s}=(x_1,...,x_{p^{em}},x_{p^{em}+1})$. It is enough to show $ x_{p^{em}+1}$ is not zero, as in this case we see the above element $t=\frac{1}{x_{p^{em}+1}}\sum_{i=1}^{p^{em}}x_ik_i\in K$ satisfies $t^{p^e}=\beta$, and thus the element $y\in F^{\frac{1}{p^e}}$ is actually in $K$.
To see $ x_{p^{em}+1}\neq 0$ assume it is zero. Then the element $u=\sum_{i=1}^{p^{em}}x_ik_i\in K$  satisfies $u^{p^e}=0$ implying $u=0$ so $\overline{s}=\overline{0}$ contrary to the assumption $\overline{s} \neq \overline{0}$.
\end{proof}
\begin{cor}
Let $F$ be as above and $A\in \Br(F)$ be of exponent $p^e$. Then $A$ is similar to the product of at most $m$ cyclic $p$ algebras of degree~$p^e$, and in particular $\len(p^e,F)\leq m$ and $\Brdim(F)\leq m$.
\end{cor}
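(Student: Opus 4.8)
The plan is to manufacture an explicit, finite purely inseparable splitting field for $A$ that has at most $m$ generators, each of full exponent $e$, and then to read off the cyclic factors from Albert's theorem (item~(\ref{Albp}) above). First I would recall the splitting statement recorded earlier, namely that $F^{\frac{1}{p^e}}$ splits every algebra of exponent $p^e$; this is precisely the field whose degree over $F$ has just been bounded. So $F^{\frac{1}{p^e}}$ is a splitting field of $A$, and by the preceding Proposition $[F^{\frac{1}{p^e}}:F]\le p^{em}$.

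The crux is to see that this degree bound forces $F^{\frac{1}{p^e}}$ to be generated over $F$ by at most $m$ elements, each a $p^e$-th root of an element of $\mul F$. Taking the Proposition with $e=1$ gives $[F^{\frac{1}{p}}:F]\le p^m$, and since the Frobenius $x\mapsto x^{p}$ is an isomorphism $F^{\frac1p}\xrightarrow{\ \sim\ }F$ carrying $F$ onto $F^{p}$, this says $[F:F^{p}]\le p^m$. Hence $F$ has a finite $p$-basis $\{b_1,\dots,b_s\}$ with $s\le m$. The same Frobenius argument for general $e$, namely that $x\mapsto x^{p^e}$ identifies $F^{\frac{1}{p^e}}$ with $F$ and $F$ with $F^{p^e}$, shows $[F^{\frac{1}{p^e}}:F]=[F:F^{p^e}]=p^{es}$ and, pulling the monomial basis of $F$ over $F^{p^e}$ back through this isomorphism, that
$$F^{\frac{1}{p^e}}=F\bigl(b_1^{1/p^e},\dots,b_s^{1/p^e}\bigr),$$
with the $s\le m$ generators $x_i:=b_i^{1/p^e}$ satisfying $x_i^{p^e}=b_i\in\mul F$ and each of degree exactly $p^e$ over $F$.

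This puts the splitting field in exactly the shape required by Albert's theorem, $F^{\frac{1}{p^e}}=F[x_1,\dots,x_s\mid x_i^{p^e}=b_i]$, so item~(\ref{Albp}) yields $A\sim\otimes_{i=1}^{s}A_i$ with each $A_i$ a cyclic $p$-algebra of degree $p^e$ (maximal subfield $F[x\mid x^{p^e}=b_i]$) and $s\le m$. Thus $A$ is similar to the product of at most $m$ cyclic $p$-algebras of degree $p^e$, whence $\len(p^e,F)\le m$; since $\ind(A)$ then divides $(p^e)^{s}\le\exp(A)^m$, one also reads off $\Brdim_p(F)\le m$. The one delicate point I expect is the identification of $F^{\frac{1}{p^e}}$ with the compositum $F(b_1^{1/p^e},\dots,b_s^{1/p^e})$ on exactly $s\le m$ generators of full exponent $e$: one must use the degree bound to control the $p$-rank $s=\log_p[F:F^p]$ rather than some larger number of low-exponent generators, and confirm via the standard properties of $p$-bases that roots of a $p$-basis really do generate all of $F^{\frac{1}{p^e}}$. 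Everything past that is a direct appeal to Albert's theorem.
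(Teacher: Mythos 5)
Your proof is correct and follows exactly the route the paper intends (the corollary is stated there without proof): split $A$ by $F^{\frac{1}{p^e}}$, bound the degree of this extension via the preceding Proposition, and feed the resulting purely inseparable splitting field into Albert's theorem. Your Frobenius/$p$-basis argument correctly supplies the one point the paper glosses over --- that the bound $[F^{\frac{1}{p^e}}:F]\le p^{em}$ (via the $e=1$ case $[F:F^p]\le p^m$) forces $F^{\frac{1}{p^e}}=F\bigl(b_1^{1/p^e},\dots,b_s^{1/p^e}\bigr)$ with $s\le m$ generators each of full exponent $e$, rather than possibly $em$ generators of lower degree, which is precisely what is needed to invoke Albert's theorem with $t\le m$ and degree $p^e$ in each cyclic factor.
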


 
\section{Symbol length in $K^M_2(F)/nK^M_2(F)$}
In this section we observe that the basic relations we used in section~$3$ also hold for $K^M_2(F)/nK^M_2(F)$, and thus the main theorem can be formulated in this context. (Notice that roots of unity are not needed.)

We then show explicitly how to shorten the symbol length of an element in $K^M_2(F)/2K^M_2(F)$ over a $C_2$ field, which by the theorem should be one symbol. This computation illustrates the process of shortening the symbol length for $p=2$ but clearly enables one to see the same process will work (but will be much longer) for any prime power $n$.

The following relations are well known over any field $F$.
\begin{prop}
If $\{a,b\},\{c,d\}\in K^M_2(F)/nK^M_2(F)$, then:
\begin{enumerate}
\item $\{a,1-a\}=0$ for $a\neq 1,0$. \label{p}
\item $\{f,1\}=0$ for $f\in\mul{F}$. \label{p0}
\item $\{a,b\}=\{f^na,b\}$ for $f\in \mul{F}.$ \label{p1}
\item $\{a,b\}=\{a,\N_{K/F}(k)b\}$ for $K=F[\sqrt[n]{a}]$ and $k\in \mul{K}.$ \label{p2}
\item $\{f,-f\}=0$ for $f\in \mul{F}.$ \label{p3}
\item If $a+b\in \mul{F}$, then $\{a,b\}=\{a+b,-a^{-1}b\}.$ \label{p4}
\item $\{a,b\}+\{c,d\}=\{a,bd^{-1}\}+\{ac,d\}.$ \label{p5}
\end{enumerate}
\end{prop}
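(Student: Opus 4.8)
The plan is to treat these as the Milnor $K$-theory analogues of Proposition~\ref{A}, deriving everything from the two structural features of $K^M_2(F)$ — bilinearity of the symbol $\{\cdot,\cdot\}$ together with the Steinberg relation $\{a,1-a\}=0$ — and from the passage to the quotient by $nK^M_2(F)$. Relation~\eqref{p} is the Steinberg relation itself, so nothing is needed. Relation~\eqref{p0} is immediate: bilinearity gives $\{f,1\}=\{f,1\cdot 1\}=2\{f,1\}$, hence $\{f,1\}=0$. Relation~\eqref{p1} is where the quotient enters: $\{f^n a,b\}=n\{f,b\}+\{a,b\}$, and $n\{f,b\}\in nK^M_2(F)$ vanishes in the quotient. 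Relation~\eqref{p5} is pure bilinearity, expanding the right-hand side as $(\{a,b\}-\{a,d\})+(\{a,d\}+\{c,d\})=\{a,b\}+\{c,d\}$.

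For relation~\eqref{p3} I would use the standard identity $-f=(1-f)/(1-f^{-1})$ (valid for $f\neq 0,1$), so that $\{f,-f\}=\{f,1-f\}-\{f,1-f^{-1}\}$; the first term is zero by Steinberg, and $\{f,1-f^{-1}\}=-\{f^{-1},1-f^{-1}\}=0$ again by Steinberg, while the case $f=1$ is covered by \eqref{p0}. From \eqref{p3} one gets the anti-symmetry $\{a,b\}=-\{b,a\}$ in the usual way, which I would then use freely. Relation~\eqref{p4} follows by applying Steinberg to $a/(a+b)$: writing $s=a+b$, the equality $b/s=1-a/s$ gives $\{a/s,b/s\}=0$, and expanding by bilinearity yields $\{a,b\}=\{a,s\}+\{s,b\}-\{s,s\}$. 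One simplifies $\{s,s\}=\{s,-1\}$ using \eqref{p3}, so that $\{a,b\}=-\{s,a\}+\{s,b\}-\{s,-1\}$, and compares with the expansion $\{a+b,-a^{-1}b\}=\{s,-1\}-\{s,a\}+\{s,b\}$; the two sides differ by $2\{s,-1\}=\{s,1\}=0$, so they coincide.

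The one genuinely nontrivial relation is \eqref{p2}, which is the projection (transfer) formula and is the step I expect to be the main obstacle. Here $K=F[\sqrt[n]{a}]=F[x]$ with $x^n=a$ is a degree-$n$ extension, and I would invoke the corestriction $\cores[K/F]\colon K^M_2(K)\to K^M_2(F)$ together with the projection formula $\cores[K/F]\bigl(\res[K/F](\alpha)\cdot\beta\bigr)=\alpha\cdot\cores[K/F](\beta)$, recalling that on $K^M_1$ the corestriction is exactly the field norm. Applying it with $\alpha=\{a\}\in K^M_1(F)$ and $\beta=\{k\}\in K^M_1(K)$ gives
\[
\{a,\N_{K/F}(k)\}=\cores[K/F]\bigl(\{a,k\}_K\bigr).
\]
But in $K$ we have $a=x^n$, so $\{a,k\}_K=n\{x,k\}_K$ and therefore $\cores[K/F](\{a,k\}_K)=n\,\cores[K/F](\{x,k\}_K)\in nK^M_2(F)$. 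Hence $\{a,\N_{K/F}(k)\}=0$ in the quotient, and adding $\{a,b\}$ to both sides gives $\{a,b\}=\{a,\N_{K/F}(k)b\}$, as claimed. The care required lies entirely in correctly setting up the transfer map and the projection formula; everything else reduces to the elementary symbol manipulations above.
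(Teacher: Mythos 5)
Your proof is correct, and for most of the relations it coincides with the paper's own argument: the Steinberg relation for (\ref{p}), the doubling trick for (\ref{p0}), reduction modulo $n$ for (\ref{p1}), pure bilinearity for (\ref{p5}), and --- for the one nontrivial part (\ref{p2}) --- exactly the same corestriction/projection-formula computation $\{a,\N_{K/F}(k)\}=\Cor_{K/F}(\{a,k\})=\Cor_{K/F}(n\{\sqrt[n]{a},k\})\in nK^M_2(F)$. Where you genuinely diverge is in (\ref{p3}) and, to a lesser extent, (\ref{p4}). The paper deduces $\{f,-f\}=0$ \emph{from} (\ref{p0}) and (\ref{p2}): it computes $\N_{K/F}(-\sqrt[n]{f})=-f$ for $K=F[\sqrt[n]{f}]$ and writes $\{f,-f\}=\{f,\N_{K/F}(-\sqrt[n]{f})\cdot 1\}=\{f,1\}=0$. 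You instead use the classical Milnor identity $-f=(1-f)/(1-f^{-1})$ and two applications of Steinberg. Your route is more elementary and in fact stronger: it establishes $\{f,-f\}=0$ integrally in $K^M_2(F)$, needs no corestriction, and makes no tacit assumption that $x^n-f$ is irreducible (which the paper's norm computation implicitly requires), whereas the paper's argument only yields the relation modulo $n$ --- which is all that is needed here and buys a one-line proof once (\ref{p2}) is available. For (\ref{p4}) the paper factors $a+b=a(1+a^{-1}b)$ and applies Steinberg directly to $\{1+a^{-1}b,-a^{-1}b\}$, which is shorter than your expansion of $\{a/s,b/s\}=0$ followed by antisymmetry and $2\{s,-1\}=\{s,1\}=0$, but both are valid. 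One cosmetic point: in your case $f=1$ of (\ref{p3}), the relation needed is $\{1,-1\}=0$, which is not literally (\ref{p0}) but follows from the same doubling argument $\{1,x\}=\{1\cdot 1,x\}=\{1,x\}+\{1,x\}$ applied in the first slot.
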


\begin{proof}
\begin{enumerate}
\item This is one of the defining relations of $K^M_2(F)$.
\item This is true even in $K^M_2(F)$. Notice $$\{f,1\}=\{f,1^2\}=\{f,1\}+\{f,1\},$$ and thus $\{f,1\}=0$. 
\item Compute $\{f^na,b\}=\{f^n,b\}+\{a,b\}=n\{f,b\}+\{a,b\}=\{a,b\}.$
\item Let $k\in \mul{K}$. Using the projection formula one computes 
\begin{align*}
\{a,\N_{K/F}(k)\}&=\Cor_{K/F}(\{a,k\})=\Cor_{K/F}(\{(\sqrt[n]{a})^n,k\})\\ 
&=\Cor_{K/F}(n\{\sqrt[n]{a},k\})=0.
\end{align*}
Thus $\{a,\N_{K/F}(k)b\}=\{a,b\}+\{a,\N_{K/F}(k)\}=\{a,b\}.$
\item Consider $K=F[\sqrt[n]{f}]$ and compute $$\N_{K/F}(-\sqrt[n]{f})=(-1)^n\N_{K/F}(\sqrt[n]{f})=(-1)^n(-1)^{n-1}f=-f.$$ Thus by (\ref{p0}) and (\ref{p2}) we have \\$0=\{f,1\}=\{f,\N_{K/F}(-\sqrt[n]{f})1\}=\{f,-f\}$.
\item Compute 
\begin{align*}
\{a+b,-a^{-1}b\}&=\{a(1+a^{-1}b),-a^{-1}b\}\\
&=\{a,-a^{-1}b\}+\{1+a^{-1}b,-a^{-1}b\}\stackrel{(\ref{p})}{=}\{a,-a^{-1}b\}\\
&=\{a,-a^{-1}\}+\{a,b\}=-\{a,-a\}+\{a,b\}\stackrel{(\ref{p3})}{=}\{a,b\}.
\end{align*}
\item Compute 
\begin{align*}
\{a,bd^{-1}\}+\{ac,d\}&=\{a,b\}+\{a,d^{-1}\}+\{a,d\}+\{c,d\}=\{a,b\}+\{c,d\}.
\end{align*}
\end{enumerate}
\end{proof}

\begin{thm}
Let $F$ be a $C_m$ field and let $\alpha \in K^M_2(F)/nK^M_2(F)$, where $n=p^k$. Then $\alpha$ can be written as the sum of at most $k(p^{m-1}-1)$ symbols.
\end{thm}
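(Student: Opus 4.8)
The plan is to mirror the entire symbol-length argument developed in sections~$3$ and~$4$ for the Brauer group, but now carried out inside the Milnor group $K^M_2(F)/nK^M_2(F)$. The foregoing proposition establishes the exact analogues of Proposition~\ref{A}: relation~\eqref{p1} is the $K$-theoretic version of Proposition~\ref{A}\eqref{Ab}, relation~\eqref{p4} corresponds to \eqref{A0}, relation~\eqref{p5} corresponds to \eqref{A1}, and relation~\eqref{p2} is the projection-formula identity that lets us absorb a norm from $F[\sqrt[n]{a}]$ into the second slot. Since all the manipulations in Theorem~\ref{MT1} used only these four relations (and the splitting of $(c,-c)_n$, whose analogue is relation~\eqref{p3}), I would first observe that the $n$-Kummer space machinery transfers verbatim: a sum of symbols $\sum_{i=1}^t \{a_i,b_i\}$ plays the role of the tensor product $\otimes_{i=1}^t (a_i,b_i)_n$, and the norm forms $\N_{V_j}$ are literally the same homogeneous degree-$n$ forms in $jn+1$ variables.

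First I would fix the prime-power exponent $n=p^k$ and, exactly as in the proof of Theorem~\ref{MT2}, show that any sum of $s+1$ symbols can be shortened to a sum of $s$ symbols whenever the ambient field forces a nontrivial zero of the degree-$n$ norm form. Concretely, given $\alpha=\sum_{i=1}^{s+1}\{a_i,b_i\}$, I would use the analogue of Proposition~\ref{P} (we may assume each $F[\sqrt[n]{a_i}]$ is a field, since if some $a_i$ is an $n$-th power the corresponding symbol already simplifies) and then invoke the standard $n$-Kummer space $V_{s+1}$ of dimension $(s+1)n+1$. Taking $s=p^{m-1}-1$ makes this dimension exceed $n^m=p^{km}$, so the $C_m$ property yields a nonzero $v$ with $\N(v)=0$; the $K$-theoretic version of Theorem~\ref{MT1}\eqref{MT13}, built from relations~\eqref{p1}--\eqref{p5} and the vanishing $\{c,-c\}=0$ from \eqref{p3}, then rewrites $\alpha$ as a sum of $s$ symbols. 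This gives the prime-exponent bound $\len(p,F)\le p^{m-1}-1$ in $K^M_2$.

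Next I would run the exponent-induction of Theorem~\ref{FMT}. The only extra ingredient there was the divisibility of symbols $(\alpha,\beta)_s\sim(\alpha,\beta)_{sk}^k$; its $K$-theoretic shadow is simply that multiplication by $p$ on $K^M_2(F)/p^kK^M_2(F)$ sends the class of a symbol of "level" $p^i$ to one of level $p^{i-1}$, so the decomposition $\alpha\sim\sum_{i=1}^k C_i$ with each $C_i$ a sum of $p^{m-1}-1$ symbols goes through by setting $\beta=p\alpha$, applying induction to the $p^{k-1}$-torsion class $\beta$, lifting, and correcting by an exponent-$p$ remainder that the previous paragraph handles. Summing the $k$ pieces gives the claimed bound $k(p^{m-1}-1)$.

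The main obstacle I anticipate is purely bookkeeping rather than conceptual: one must verify that every step of Theorem~\ref{MT1} really does use only relations among symbols that survive the passage to $K^M_2(F)/nK^M_2(F)$, since in the Brauer setting one occasionally exploits genuine algebra isomorphisms (e.g. the explicit standard-generator computations in Proposition~\ref{A}\eqref{A1}) rather than formal symbol identities. The force of the preceding proposition is precisely that each such isomorphism has been re-derived as a relation in $K$-theory, so the danger is only that an implicit use of, say, the multiplicativity of the reduced norm or a degree-counting argument sneaks in; checking that the additive Milnor relations genuinely suffice at the one delicate point---the final splitting $\{c,-c\}=0$ that kills the last symbol when the norm form vanishes---is where I would concentrate the verification. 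Because relation~\eqref{p3} supplies exactly this, I expect no real obstruction, and the theorem follows.
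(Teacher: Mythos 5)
Your proposal is correct and is essentially the paper's own argument: the paper's entire proof of this theorem consists of establishing the listed $K$-theoretic relations and observing that the shortening machinery of sections~3 and~4 (the Kummer norm forms, the $C_m$-forced zero, and the exponent induction, with $p^{j}\{a,b\}=\{a,b^{p^{j}}\}$ playing the role of divisibility of symbols) transfers verbatim, which it then illustrates only with the $C_2$, $n=2$ example. The one step to phrase carefully is your exponent induction: apply the inductive hypothesis to the image of $\alpha$ under the reduction $K^M_2(F)/p^kK^M_2(F)\rightarrow K^M_2(F)/p^{k-1}K^M_2(F)$ and lift the resulting presentation, rather than literally to the class $\beta=p\alpha$ (which lives in $K^M_2(F)/p^kK^M_2(F)$, where the subgroup killed by $p^{k-1}$ need not coincide with $pK^M_2(F)/p^kK^M_2(F)$); with that reading the remainder is of the form $p^{k-1}\epsilon$, and your exponent-$p$ case disposes of it.
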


We are ready for the example.
Let $F$ be a $C_2$ field and let $$\alpha=\{a_1,b_1\}+\{a_2,b_2\}\in K^M_2(F)/2K^M_2(F).$$
We will show that $\alpha$ can be rewritten as a single symbol as stated in the theorem.
Recall the norm forms from section $2$ attached to $\alpha$, namely:
Letting $x_i,y_i$ be the standard generators for the two quaternions, $(a_i,b_i)_2$, $L_i=F[x_i]=F[\sqrt{a_i}]$, we define
$$V_1=Fx_1+L_1y_1$$ 
$$V_2=V_1x_2+L_2y_2$$ and their norm forms
$$N_1(v_1=fx_1+l_1y_1)=f^2a_1+\N_{L_1/F}(l_1)b_1$$
$$N_2(v_2=v_1x_2+l_2y_2)=N_1(v_1)a_2+\N_{L_2/F}(l_2)b_2$$
We may assume $L_i's$ are fields.
Notice that $\deg(N_2)=2$ and $\dim(N_2)=5>2^2$, and thus there exist a non-zero $v_2=v_1x_2+l_2y_2\in V_2$ where $v_1=fx_1+l_1y_1$, such that $\N_2(v_2)=0$. 
If $v_1=0,$ we get $\N_{L_2/F}(l_2)b_2=0$, implying $v_2=0$ and thus $v_1\neq 0$.
Also, if $\N_1(v_1)=0$ we have $$\{a_1,b_1\}\stackrel{(\ref{p2})}{=}\{a_1,\N_{L_1/F}(l_1)b_1\} \stackrel{(\ref{p1})}{=}\{f^2a_1,\N{L_1/F}(l_1)b_1\} \stackrel{\N_1(v_1)=0}{=}(c,-c)\stackrel{(\ref{p3})}{=}0$$ and $\alpha$ is one symbol.
Thus we assume $\N_1(v_1)\neq 0$.
From the above we write \\
 \ \\
$\{a_1,b_1\}+\{a_2,b_2\}=\{f^2a_1,\N_{L_1/F}(l_1)b_1\}+\{a_2,b_2\}\\
\stackrel{(\ref{p4},\ref{p2})}{=} \{\N_1(v_1),(f^2a_1)^{-1}\N_{L_1/F}(l_1)b_1\}+\{a_2,\N_{L_2/F}(l_2)b_2\}\\ 
\stackrel{(\ref{p5})}{=} \{\N_1(v_1),((f^2a_1)^{-1}\N_{L_1/F}(l_1)b_1)(\N_{L_2/F}(l_2)b_2)^{-1}\}+\{\N_1(v_1)a_2,\N_{L_2/F}(l_2)b_2\} \\
 \stackrel{\N_2(v_2)=0}{=}\{\N_1(v_1),(f^2a_1)^{-1}\N_{L_1/F}(l_1)b_1(\N_{L_2/F}(l_2)b_2)^{-1}\}+\{c,-c\}\\
\stackrel{(\ref{p3})}{=} \{\N_1(v_1),(f^2a_1)^{-1}\N_{L_1/F}(l_1)b_1(\N_{L_2/F}(l_2)b_2)^{-1}\}.$

\newpage


\begin{thebibliography}{99}



\bibitem{Al} A. A. Albert, Structure of algebras, AMS Coll. Pub., vol. 24, AMS, Providence, RI, 1961, revised printing.
\bibitem{A0} A. A. Albert and H. Hasse. A determination of all normal division algebras over an algebraic number field. Trans. Amer. Math. Soc., 34 (1932), 722–726.




\bibitem{Art82} M. Artin, Brauer-Severi varieties, Brauer groups in ring theory and algebraic geometry
(Wilrijk, 1981), Lecture Notes in Math., vol. 917, Springer, Berlin, 1982, pp. 194–210.

\bibitem{Ketura} A. Auel, E. Brussel, S. Garibaldi and U. Vishne, Open problems on central simple algebras, Transformation Groups, March 2011, Volume 16, Issue 1, pp 219-264.  

\bibitem{BMT} E. Brussel, K. Mckinnie and E. Tengan, Cyclic Length in the Tame Brauer Group of the Function Field of a $p$-Adic Curve, preprint.

\bibitem{ACT} A. Chapman, Polynomial equations over division rings, master thesis, Bar Ilan, 2009.


\bibitem{GS} P. Gille and T. Szamuely, Central simple algebras and Galois cohomology, Cambridge studies in advanced mathematics 101 (2006).




\bibitem{BHN} R. Brauer, H. Hasse and E. Noether. Beweis eines Hauptaatzes in der Theorie der Algebren. J. Math., 167 (1931), 399–404.



\bibitem{MF} M. Florence, Central simple algebras of index $p^n$ in characteristic $p$, preprint http://www.math.jussieu.fr/$\sim$florence/p-alg-compositio-v2.pdf.



\bibitem{HHK09} D. Harbater, J. Hartmann, and D. Krashen, Applications of patching to quadratic
forms and central simple algebras, Invent. Math. 178 (2009), 231–263.

\bibitem{Jac} N. Jacobson, Finite Dimensional Division Algebras Over Fields, Springer-Verlag, 1996.

\bibitem{dJ04} A.J. de Jong, The period-index problem for the Brauer group of an algebraic surface,
Duke Math. J. 123 (2004), no. 1, 71–94.







\bibitem{Lang} S. Lang, On quasi-algebraic closure, Ann. of Math. 55, 373-390 (1952).



\bibitem{Lie08} M. Lieblich, Twisted sheaves and the period-index problem, Compositio Math. 144
(2008), 1–31.


\bibitem{LRRS} M. Lorenz, L.H. Rowen, Z. Reichstein, and D.J. Saltman, The field of definition of a division algebra, J. London Math. Soc. 68 no. 3:651-679, 2003.

\bibitem{MAT} E. Matzri, $\mathbb{Z}_3\times \mathbb{Z}_3$ crossed products, to appear in J. of Algebra. 

\bibitem{M} A.S Merkurjev, Essential $p$-dimension of $ \operatorname{\mathbf{PGL}}(p^2)$. J. Amer. Math. Soc. 23:693-712, 2010.



\bibitem{MS} A.S. Merkurjev and A.A. Suslin, K-cohomology of Severi-Brauer varieties and the
norm residue homomorphism, Math. USSR Izv. 21 (1983), no. 2, 307–340.





\bibitem{Nagata} M. Nagata, Note on a paper Lang concerning quasi-algebraic closure, Mem. Univ. Kyoto 30, 237-241 (1957).

\bibitem{RT} S. Rosset and J. Tate, A reciprocity low for $K_2$-traces, Comment. Math. Helv. 58(1983), 38-47.
	


\bibitem{RB} L.H. Rowen, Graduate algebra: non commutative view. Graduate Studies in Mathematics, 91.
American Mathematical Society, Providence, RI, 2008. xxi+648 pp.


\bibitem{RSD} L.H. Rowen and D.J. Saltman, Dihedral algebras are cyclic, Proceedings of the American
Mathematical Society, Vol. 84, No. 2 (Feb. 1982), 162-164.

\bibitem{Sal97a} D.J. Saltman , Division algebras over p-adic curves, J. Ramanujan Math. Soc. 12 (1997), 25–47.

\bibitem{SLN} D. J. Saltman, Lectures on Division Algebras, CBMS Number 94, Conference on
Division Algebras held at Colorado State University, Fort Collins, June 14-18,
1998, American Mathematical Society, Rhode Island.


\bibitem{Serre} J.P. Serre, Galois cohomology, translated from french by P.Ion, Springer (1997).


\bibitem{T1} J.P. Tignol, Cyclic algebras of small exponent, American Mathematical Society. Proceedings, Vol. 89, no. 4, 587-588 (1983).
\bibitem{T2}J.P. Tignol, Corps `a involution neutralisÕ¥s par une extension abÕ¥lienne Õ¥lÕ¥mentaire,
Springer Lecture Notes in Math. 844 (1981), 1-34.




\bibitem{Wed21} J.H.M. Wedderburn, On division algebras, Trans. Amer. Math. Soc. 22 (1921), 129–
135.




%
\end{thebibliography}
\end{document}